\documentclass[11pt]{amsart}

\usepackage[letterpaper,margin=1.08in]{geometry}
\usepackage{amsmath,amssymb,mathtools}
\usepackage{microtype}
\usepackage[colorlinks=true,linkcolor=blue,citecolor=blue,urlcolor=blue]{hyperref}

\newcommand{\R}{\mathbb R}
\newcommand{\Sph}{\mathbb S}
\newcommand{\Per}{\operatorname{Per}}

\newcommand{\id}{\mathrm{Id}}
\newcommand{\dd}{\,d}

\theoremstyle{plain}
\newtheorem{theorem}{Theorem}[section]
\newtheorem{proposition}[theorem]{Proposition}
\newtheorem{lemma}[theorem]{Lemma}
\newtheorem{corollary}[theorem]{Corollary}
\newtheorem{conjecture}[theorem]{Conjecture}
\theoremstyle{remark}
\newtheorem{remark}[theorem]{Remark}

\title[A Conjecture on a P\'olya Functional]{A Proof of a Conjecture on a P\'olya Functional}
\author{Zikang Deng}
\address{Beijing Normal University, Beijing, China}
\subjclass[2020]{Primary 35P15, 49Q10; Secondary 35J25, 52A40}
\keywords{P\'olya functional, torsional rigidity, first Dirichlet
eigenvalue, convex domains, Minkowski problem}
\date{August 30, 2026}
\hypersetup{
  pdftitle={A Proof of a Conjecture on a P\'olya Functional},
  pdfauthor={Zikang Deng}
}

\begin{document}

\begin{abstract}
Let $\lambda _1(\Omega)$ and $T(\Omega)$ denote the first Dirichlet
eigenvalue and torsional rigidity of a bounded convex domain
$\Omega\subset\R^2$, and let $M=\max_\Omega u$, where $u$ is the torsion
function.  We prove the sharp inequalities
\[
   \frac{\pi^2}{24}<\frac{\lambda _1(\Omega)T(\Omega)}{|\Omega|}
   <\frac{\pi^2}{12},
\]
thereby resolving, in dimension two, Conjecture~4.2 of van den Berg,
Buttazzo, and Pratelli \cite{vandenberg2021}.  Its planar formulation was
restated as Conjecture~1.1 by Ba\~nuelos and Mariano, who proved it for
triangles and rectangles \cite{banuelos2024}.  The lower bound follows by
combining Payne's strict estimate for $\lambda _1M$ with the sharp
torsion-efficiency inequality
\[
       T(\Omega)\geq \frac13 |\Omega|M.
\]
For smooth, strictly convex $\Omega$,
let $\phi$ be an Airy stress potential and put $X=-\nabla\phi$.  Then
$X|_{\partial\Omega}$ parametrizes a convex body $K$ whose surface-area
measure is the torsional first-variation measure.  If $x_0$ maximizes $u$
and $X_0=X(x_0)$, a one-dimensional inequality for every directional
maximum profile gives the stronger containment
\[
       K-X_0\supset \frac{2M}{3}(\Omega-x_0).
\]
For the upper bound, we factor the P\'olya functional through the second
torsion moment.  A level-set torsion--perimeter inequality yields the
factor $5/6$, while a sharp weighted one-dimensional estimate yields the
factor $\pi^2/10$.  Collapsing triangles and elongating rectangles show
that both endpoint constants, as well as the efficiency constant $1/3$,
are sharp.
\end{abstract}

\maketitle

\section{Introduction and main results}

Let $\Omega\subset\R^2$ be a bounded convex domain.  Its torsion function
$u=u_\Omega$ is the weak solution of
\begin{equation}\label{eq:torsion}
     -\Delta u=1\quad\hbox{in }\Omega,
     \qquad u\in H_0^1(\Omega).
\end{equation}
Thus $u$ has zero Sobolev trace on $\partial\Omega$.  The first Dirichlet
eigenvalue is characterized by
\begin{equation}\label{eq:rayleigh-definition}
 \lambda _1(\Omega)=
 \inf_{0\neq v\in H_0^1(\Omega)}
 \frac{\int_\Omega|\nabla v|^2}{\int_\Omega v^2}.
\end{equation}
We use the notation
\[
 A=|\Omega|,\qquad M=\max_\Omega u,
 \qquad T=T(\Omega)=\int_\Omega u.
\]
The scale-invariant P\'olya functional is
\[
             F(\Omega)=\frac{\lambda _1(\Omega)T(\Omega)}{|\Omega|}.
\]
Here $T(\Omega)$ measures the resistance to torsion of a beam with
cross-section $\Omega$, while $\lambda _1(\Omega)$ is the square of its
fundamental membrane frequency.  P\'olya and Szeg\H{o} proved the universal
estimate $F(\Omega)\leq1$ \cite{polyaszego1951}.  The following sharp
problem is the planar case of Conjecture~4.2 in
\cite{vandenberg2021}; it is stated in this form as Conjecture~1.1 in
\cite{banuelos2024}.

\begin{conjecture}[P\'olya functional conjecture]\label{conj:polya}
For every bounded convex planar domain $\Omega\subset\R^2$,
\begin{equation}\label{eq:conjecture}
        \frac{\pi^2}{24}<F(\Omega)<\frac{\pi^2}{12}.
\end{equation}
Both constants are sharp.  The lower constant is approached by collapsing
triangles, and the upper constant by elongating rectangles approaching the
infinite strip.
\end{conjecture}

Before the present argument, Conjecture~\ref{conj:polya} was known for
several special classes.  Van den Berg, Ferone, Nitsch, and Trombetti
proved the lower bound for rhombi and isosceles triangles and determined
the limits for general thinning convex sets \cite{vandenberg2020}.
Ba\~nuelos and Mariano proved both bounds for every triangle and rectangle,
as well as the upper bound for tangential quadrilaterals
\cite{banuelos2024}.  For arbitrary convex planar domains, the best
previously known lower bound was $\pi^2/32$, obtained by combining the
torsion-efficiency estimate of \cite{dellapietra2018} with Payne's
spectral--maximum estimate \cite{payne1981}; the best upper bound was the
value $0.996613\ldots$ obtained by Ftouhi \cite{ftouhi2022}.

Our main result is the following.

\begin{theorem}\label{thm:main}
Every bounded convex domain $\Omega\subset\R^2$ satisfies
\[
       \frac{\pi^2}{24}
       <\frac{\lambda _1(\Omega)T(\Omega)}{|\Omega|}
       <\frac{\pi^2}{12}.
\]
Both constants are sharp.
More precisely, the lower value is approached by the collapsing isosceles
triangles
\[
 \Omega_\varepsilon
 =\{(x,y):0<x<1,\ 0<y<\varepsilon(1-2|x-\tfrac12|)\},
 \qquad \varepsilon\downarrow0,
\]
and the upper value is approached by the rectangles
$R_L=(-L/2,L/2)\times(0,1)$ as $L\to\infty$.
\end{theorem}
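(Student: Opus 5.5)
The proof has three parts: the lower bound, the upper bound, and sharpness. I follow the route announced in the abstract.

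\emph{Lower bound.} I first invoke Payne's estimate \cite{payne1981}: for convex planar $\Omega$ one has $\lambda_1 M>\pi^2/8$, with strict inequality since equality holds only in the limiting strip. It then suffices to prove the torsion-efficiency inequality $T\geq \frac13 AM$, because together these give $F=\lambda_1 T/A\geq \lambda_1 M/3>\pi^2/24$.

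For the efficiency inequality I approximate $\Omega$ by smooth, strictly convex domains; $T$, $A$ and $M$ are continuous under Hausdorff convergence of convex sets, so this is harmless. On a smooth domain I take an Airy stress potential $\phi$ with $\Delta\phi = u$ (suitably normalized) and set $X=-\nabla\phi$. The key identity I expect is that $X|_{\partial\Omega}$ traces a convex curve bounding a body $K$. Its surface-area measure is $|\nabla u|^2\,ds$ pushed forward by the Gauss map, i.e.\ the torsional first-variation measure. Writing $T$ as a mixed volume $V(K,\Omega)$ via $\int_{\partial\Omega} h_K\,|\nabla u|\,ds$-type identities then lets monotonicity of mixed volumes act on containments.

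So I need $K-X_0\supset\frac{2M}{3}(\Omega-x_0)$. Then $V(K,\Omega)\geq \frac{2M}{3}|\Omega|$ up to the right normalization, and this yields $T\ge AM/3$. The containment is checked one support direction at a time. For each unit $\nu$, restrict to the directional profile $t\mapsto \max\{u(x): x\cdot\nu=t\}$. This profile is concave, since $\sqrt u$ is concave, and it satisfies a one-dimensional ODE inequality coming from $-\Delta u=1$. Comparing it with the extremal parabola/triangle profile should give $h_{K-X_0}(\nu)\ge\frac{2M}{3}h_{\Omega-x_0}(\nu)$.

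I expect this one-dimensional comparison, with the exact constant $2/3$ realized by thin triangles, to be the main obstacle. I would first try to reduce it to a Riccati-type inequality for the profile, using concavity of $\sqrt u$.

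\emph{Upper bound.} I factor
\[
F=\frac{\lambda_1\int u^2}{T}\cdot\frac{T^2}{A\int u^2}.
\]
\begin{itemize}
\item The first factor: using $u$ as a test function gives $\lambda_1\le T/\int u^2$, which points the wrong way. I would instead test the Rayleigh quotient with a function of $u$, namely $v=g(u)$, and integrate along level sets using the coarea formula. A weighted one-dimensional Hardy/Wirtinger estimate, sharp on the strip, should bound this factor by $\pi^2/10$.
\item The second factor: a level-set torsion--perimeter inequality should give the factor $5/6$. Here I use $\int_{\{u>t\}}1=\int_{\{u=t\}}|\nabla u|$, Cauchy--Schwarz, and the isoperimetric-type behaviour of level sets of convex domains.
\end{itemize}
The product is $\frac{\pi^2}{10}\cdot\frac56=\pi^2/12$. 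Strictness holds because equality in both steps would force the strip.

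\emph{Sharpness.} I compute asymptotically. For $R_L$, separation of variables gives $T/A\to1/12$ and $\lambda_1\to\pi^2$, so $F\to\pi^2/12$. For $\Omega_\varepsilon$, a thin-domain approximation gives $u\approx \frac12 y(h(x)-y)$, hence $T\approx\int h^3/12$. Then $\lambda_1\varepsilon^2\to\pi^2$ (the eigenfunction concentrates at the apex), and $\int h^3/(12\int h)=\varepsilon^2/24$. Therefore $F\to\pi^2/24$, consistent with the limits computed in \cite{vandenberg2020}.
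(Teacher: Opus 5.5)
You have the paper's architecture (Payne times efficiency for the lower bound, and the factorization through $\int_\Omega u^2$ for the upper bound). However, the steps that carry the mathematics are left as expectations, and the lower bound has a genuine hole: nothing in your outline converts $h_K(e)$ into a quantity computed from the directional profile $v$.

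\emph{The missing link.} This conversion is where the Airy potential is really used. It requires the specific divergence-free stress $\Sigma=(|\nabla u|^2-2u)\id-2\nabla u\otimes\nabla u=J^TD^2\phi J$. Prescribing only $\Delta\phi$ fixes $\phi$ modulo harmonic functions, and so determines neither $K$ nor its surface-area measure. With this $\Sigma$, the traction $\Sigma n=-g^2n$ gives $dX/ds=g^2t$ on $\partial\Omega$, hence $K$ and $S_K=(n_\Omega)_\#(g^2\dd s)$. Along the curve of fiber maximizers $x\mapsto(x,y(x))$, where $u_y=0$ and $u_x=v'$, one then gets $\frac{d}{dx}\bigl(X\cdot e\bigr)=-\phi_{xx}=-\Sigma_{yy}=2v-v'^2$. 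This curve ends at the support points of $\Omega$ in directions $\pm e$, which $X$ carries to those of $K$, so $h_K(e)-X_0\cdot e=\int_{x^*}^b(2v-v'^2)\dd x$. The mixed-area identity you need is likewise $V(\Omega,K)=\frac12\int_{\partial\Omega}(x\cdot n)g^2\dd s=2T$, by Pohozaev; it is not an identity involving $h_K|\nabla u|$.

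\emph{The one-dimensional inequality.} Only after this link is the problem one-dimensional. What must be proved is $\int_0^L(2v-v'^2)\dd x\ge\frac{2M}{3}L$ for $\sqrt v$ concave and nondecreasing with $v(0)=0$, $v(L)=M$, under the obstacle $v'^2\le2(M-v)$. The paper gets the obstacle from the $P$-function bound $|\nabla u|^2+2u\le2M$. Your ``ODE from $-\Delta u=1$'' could also supply it: $u_{yy}\le0$ on the ridge gives $v''\ge-1$, which integrates to the obstacle on each flank. Your claim that the profile is concave is false, though. Only $\sqrt v$ is concave, and the extremal profile (for $e=e_1$ on $\Omega_\varepsilon$, $0<x<\frac12$) is $v\approx\varepsilon^2x^2/2$, which is convex. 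An argument built on concavity of $v$ therefore cannot reach $2/3$. Because of the $-v'^2$ term, a pointwise comparison with an extremal profile does not control the integral either. The paper instead substitutes $q=\sqrt{v/M}$, uses that $\mathcal A=d\xi/dq$ is nondecreasing, and runs a layer-cake argument in which the obstacle becomes $Q(s)\le s/\sqrt{s^2+2}$.

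\emph{The upper bound.} Your ingredients for $5/6$ essentially match the paper's: flux, Cauchy--Schwarz, and the torsion--perimeter inequality $T(D)\Per(D)^2/|D|^3>1/3$ on superlevel sets. But $\pi^2/10$ is not a generic weighted Hardy/Wirtinger inequality. Those same ingredients make $I^{2/3}$ convex, where $I(t)=\int_t^1S$. This single fact gives $\int_0^1I>\frac35\Phi^2$, hence $5/6$. In the coordinate $y=(I/\Phi)^{1/3}$ it also gives the monotonicity of the weight $a(y)=S(t(y))/(3\Phi y)$. With the trial function $f(y)=\int_y^1\sin(\pi s/2)\,a(s)^{-1}\dd s$, one has $D-\frac{10}{\pi^2}\kappa=\int_0^1r/a=-\int R\dd(1/a)$. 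This is $\ge0$ only because $1/a$ is nonincreasing and $R\ge0$. Since $\int_0^1r=R(1)=0$, $r$ changes sign, and for non-monotone weights the quantity can be negative. So the second factor rests on the same level-set convexity as the first, and your outline provides neither that structure nor a trial function. Incidentally, testing with $u$ gives $\lambda_1\int_\Omega u^2/T\le1$, which points the right way; it is just not sharp.
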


The lower bound follows from a sharp resolution of the torsion-efficiency
conjecture of Henrot, Lucardesi, and Philippin \cite{henrot2018}; see also
\cite{dellapietra2018}.

\begin{theorem}[Sharp torsion efficiency]\label{thm:efficiency}
For every bounded convex planar domain,
\begin{equation}\label{eq:efficiency}
             T(\Omega)\geq \frac13|\Omega|M(\Omega).
\end{equation}
The constant $1/3$ is sharp.  More precisely, if $\Omega$ is smooth and
strictly convex, there exist a convex body $K$ and points $x_0\in\Omega$
and $X_0\in K$, where $u(x_0)=M$, such that
\begin{equation}\label{eq:containment-intro}
             K-X_0\supset \frac{2M}{3}(\Omega-x_0).
\end{equation}
\end{theorem}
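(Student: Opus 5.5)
The plan is to prove the containment \eqref{eq:containment-intro} for smooth, strictly convex $\Omega$, convert it into \eqref{eq:efficiency} through a mixed-volume identity, pass to general convex domains by approximation, and check sharpness on thin triangles. The first step builds a divergence-free symmetric stress tensor from $u$. Put
\[
S=2\Bigl(\nabla u\otimes\nabla u-\tfrac12|\nabla u|^2\,\id+u\,\id\Bigr).
\]
From $\operatorname{div}(\nabla u\otimes\nabla u)=\Delta u\,\nabla u+\nabla^2u\,\nabla u=-\nabla u+\nabla(\tfrac12|\nabla u|^2)$ we get $\operatorname{div}S=0$. Since $\Omega$ is simply connected, $S=\operatorname{cof}\nabla^2\phi$ for an Airy potential $\phi$, and we set $X=-\nabla\phi$.

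On $\partial\Omega$ we have $u=0$ and $\nabla u=-|\nabla u|n$, so $Sn=|\nabla u|^2n$ and $S\tau=-|\nabla u|^2\tau$. Hence, along the arclength parametrization of $\partial\Omega$, $X'$ is parallel to the tangent $\tau$ with speed $|\nabla u|^2$; the sign is fixed by the orientation convention for $\operatorname{cof}$ and is chosen so that $X$ moves in the $+\tau$ direction. Therefore $X|_{\partial\Omega}$ traces a closed convex curve with the same outer normal as $\Omega$ at corresponding points. Its interior $K$ has surface-area measure $dS_K=|\nabla u|^2\,ds$, pushed to $\Sph^1$ by the Gauss map of $\Omega$. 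The curve closes because $\int_{\partial\Omega}|\nabla u|^2n\,ds=0$, which follows by integrating $\operatorname{div}S=0$.

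The planar Pohozaev identity for $-\Delta u=1$ gives $\tfrac12\int_{\partial\Omega}(x\cdot n)|\nabla u|^2\,ds=2T$. Consequently
\[
V(\Omega,K)=\tfrac12\int h_\Omega\,dS_K=2T.
\]
Monotonicity and translation invariance of mixed volumes, applied to \eqref{eq:containment-intro}, give $2T=V(\Omega,K-X_0)\ge\frac{2M}{3}V(\Omega,\Omega)=\frac{2M}{3}A$. This is exactly \eqref{eq:efficiency}.

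The core is the containment, which amounts to the support-function inequality
\[
h_K(\nu)-X_0\cdot\nu\ \ge\ \tfrac{2M}{3}\bigl(h_\Omega(\nu)-x_0\cdot\nu\bigr)\qquad\text{for all }\nu\in\Sph^1 .
\]
Fix $\nu$ and write $x=x_0+t\nu+s\nu^\perp$. I would integrate $\partial_t(X\cdot\nu)=-(\nabla^2\phi\,\nu)\cdot\nu$, a component of $S$ after the cofactor rotation, from $x_0$ to the boundary point with normal $\nu$. Integrating the identity $\operatorname{div}S=0$ over the portion of $\Omega$ beyond each line $\{t=\mathrm{const}\}$ should let me rewrite $h_K(\nu)-X_0\cdot\nu$ as an integral in $t$ of a quantity bounded below in terms of the directional maximum profile $g_\nu(t)=\max\{u(x):(x-x_0)\cdot\nu=t\}$.

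One then needs a one-dimensional lemma. The profile satisfies $g_\nu(0)=M$ and vanishes at $t=d_\nu=h_\Omega(\nu)-x_0\cdot\nu$. Using $-\Delta u=1$ together with convexity, in the form of the concavity of $\sqrt u$ (Makar-Limanov) and a second-order inequality along the maximizing curve, it should satisfy a lower bound whose integral over $(0,d_\nu)$ is at least $\frac23Md_\nu$. The prototype of such a bound is $g_\nu(t)\ge M(1-t^2/d_\nu^2)$, since $\int_0^1(1-s^2)\,ds=\frac23$.

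This step is where I expect the main obstacle. Concavity alone yields only $\frac12$, so the argument must exploit the PDE more finely. Moreover, the identification of $h_K(\nu)-X_0\cdot\nu$ with a weighted integral of the profile needs careful sign control of the off-diagonal stress terms. My first attempt would be to compare $u$ on each line with the one-dimensional torsion profile of the strip it spans.

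Finally, for a general bounded convex $\Omega$, I would approximate it in Hausdorff distance by smooth strictly convex domains. Then $T$, $A$ and $M$ converge by standard continuity of the torsion problem under convex convergence, so \eqref{eq:efficiency} passes to the limit. For sharpness, take the collapsing triangles $\Omega_\varepsilon$ of Theorem~\ref{thm:main}. By thin-domain asymptotics $u\approx\frac12y(h(x)-y)$ with $h(x)=\varepsilon(1-2|x-\frac12|)$, which gives $T\approx\frac1{12}\int_0^1h^3\,dx$, $M\approx\frac18\varepsilon^2$ and $A=\frac{\varepsilon}{2}$. Since $\int_0^1h^3\,dx=\frac{\varepsilon^3}{4}$, we get $T\approx\frac{\varepsilon^3}{48}$ and $AM\approx\frac{\varepsilon^3}{16}$, so $T/(AM)\to\frac13$.
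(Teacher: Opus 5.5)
\textbf{Verdict: genuine gap.} Everything outside the containment matches the paper, and the containment is the whole content of the theorem. Your $S$ is $-\Sigma$. The Airy construction of $K$ with $S_K=(n_\Omega)_\#(|\nabla u|^2\,ds)$, the Pohozaev identity $V(\Omega,K)=2T$, mixed-area monotonicity, the approximation step and the thin-triangle computation are all fine. You left the containment as something that "should" hold, and your sketch of it points the wrong way.

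\emph{The path of integration.} A straight segment in direction $\nu$ does not produce $h_K(\nu)-X_0\cdot\nu$, and neither does force balance on caps. You must follow the curve $p(t)$ of fiber maximizers, where $p(t)$ is the point of the line $\{(x-x_0)\cdot\nu=t\}$ at which $u=g_\nu(t)$. Normalize $X$ so that $dX/ds=|\nabla u|^2\tau$ on $\partial\Omega$. Then $\partial_s(X\cdot\nu)=-2u_tu_s$, which vanishes on the ridge because $u_s=0$ there. So no sign control of the off-diagonal term is needed: unimodality on fibers bounds the transverse error by $O(|g_\nu-u|)$. Together with Danskin's formula $g_\nu'(t)=u_t(p(t))$, this gives the exact identity $\frac{d}{dt}X(p(t))\cdot\nu=2g_\nu-g_\nu'^2$ a.e. As $t\to d_\nu$, $p(t)$ tends to the support point of $\Omega$ with normal $\nu$, which $X$ sends to the support point of $K$. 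Hence $h_K(\nu)-X_0\cdot\nu=\int_0^{d_\nu}(2g_\nu-g_\nu'^2)\,dt$. By contrast, integrating along the ray $x_0+t\nu$ only reaches a boundary point at distance $\rho_{\Omega-x_0}(\nu)\le d_\nu$. That gives the radial bound $h_{K-X_0}(\nu)\ge\frac{2M}{3}\rho_{\Omega-x_0}(\nu)$, which is weaker than the containment you need.

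\emph{The one-dimensional inequality and its PDE input.} What is needed is $\int_0^{d}(2g-g'^2)\,dt\ge\frac{2M}{3}d$ for $g=g_\nu$. The decisive ingredient is the Payne--Philippin $P$-function bound $|\nabla u|^2+2u\le 2M$, which gives the obstacle $g'^2\le 2(M-g)$. Combined with concavity and monotonicity of $\sqrt g$, this is Lemma~\ref{lem:ridge}. The paper proves it by passing to $q=\sqrt{g/M}$ and writing the deficit as $\int_0^1[(2q^2-\frac23)\mathcal A-4q^2/\mathcal A]\,dq$, where $\mathcal A$ is the nondecreasing derivative of the inverse profile. A layer-cake estimate using $\mathcal A\ge\sqrt2\,q/\sqrt{1-q^2}$ finishes it.

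Your prototype $g_\nu(t)\ge M(1-t^2/d_\nu^2)$ is false, and so is any bound of the form $\int g_\nu\ge\frac23 Md_\nu$. For the collapsing triangles in the longitudinal direction, to leading order $g_\nu\approx M(1-t/d_\nu)^2$, so $\int_0^{d_\nu}g_\nu\approx\frac13 Md_\nu$. There $g_\nu'^2=O(\varepsilon^4)$ is negligible against $g_\nu=O(\varepsilon^2)$. It is precisely the factor $2$ in $2g_\nu-g_\nu'^2$ that restores $\frac23 Md_\nu$, with asymptotic equality. Comparing $u$ on lines with strip profiles cannot work either. The equality cases in Remark~\ref{rem:equality-ridge} include profiles with a long linear-$\sqrt g$ branch lying far below the strip parabola.
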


\subsection{Discussion of the method of proof}

The main difficulty is that the two factors in $F$ respond in opposite
directions to classical symmetrizations: symmetrization decreases the
first Dirichlet eigenvalue but increases torsional rigidity.  Thus a sharp
estimate for their product does not follow by optimizing the two factors
separately.  Our proof treats the lower and upper bounds by two independent
mechanisms.

For the lower bound, write
\[
 F(\Omega)=\bigl(\lambda _1(\Omega)M(\Omega)\bigr)
 \frac{T(\Omega)}{|\Omega|M(\Omega)}.
\]
Payne's strict estimate gives
$\lambda _1(\Omega)M(\Omega)>\pi^2/8$.  Theorem~\ref{thm:efficiency}
supplies the sharp second factor $1/3$.  To prove that theorem for a smooth
strictly convex domain, we form the divergence-free stress tensor
\eqref{eq:stress}, choose an Airy potential $\phi$, and put
$X=-\nabla\phi$.  The boundary identity $dX/ds=g^2t$ shows directly that
$X(\partial\Omega)$ bounds a convex body $K$ whose surface-area measure is
the torsional first-variation measure.  In each direction we then consider
the maximum of $u$ on the perpendicular fibers.  A sharp one-dimensional
inequality for this directional maximum profile yields the support-function
containment \eqref{eq:containment-intro}.  Mixed-area monotonicity converts
that containment into $T(\Omega)\geq |\Omega|M(\Omega)/3$.  General convex
domains follow by smooth outer approximation.

For the upper bound, we use the exact factorization
\[
 F(\Omega)=
 \frac{T(\Omega)^2}{|\Omega|\int_\Omega u^2}
 \frac{\lambda _1(\Omega)\int_\Omega u^2}{T(\Omega)}.
\]
The first factor is strictly smaller than $5/6$.  This follows from the
torsion--perimeter inequality applied to all superlevel sets of $u$.  For
the second factor, a monotone coordinate built from the level distribution
reduces the Rayleigh quotient to a weighted one-dimensional problem; an
explicit sine test function gives the sharp bound $\pi^2/10$.  Their
product is $\pi^2/12$.  Finally, long rectangles and collapsing triangles
give the two limiting constants and also show that the efficiency constant
$1/3$ is sharp.

The measure defining $K$ is the torsional first-variation measure used in
the torsional Minkowski problem \cite{colesanti2010}; related variational
measures are studied in \cite{crasta2024}.  Thus the existence of an
associated Minkowski body is not itself new.  The new ingredients are the
interior Airy-gradient realization of its boundary, the identity for the
directional maximum profiles in Lemma~\ref{lem:airy-ridge}, and the
single-center containment \eqref{eq:containment-intro}.  The Airy potential
itself need not be convex; the boundary curve $X|_{\partial\Omega}$ is the
curve that bounds $K$.

\subsection{Organization of the paper}

Section~\ref{sec:classical} records the concavity, $P$-function,
torsion--perimeter, and mixed-area inputs, including a proof of Payne's
strict estimate.  Section~\ref{sec:ridge} establishes the sharp
one-dimensional inequality for directional maximum profiles.
Section~\ref{sec:airy} constructs $K$ through the Airy realization and
proves Theorem~\ref{thm:efficiency}.  Section~\ref{sec:upper} proves the
upper bound independently through the second torsion moment.
Section~\ref{sec:sharpness} verifies sharpness of all three constants.

\section{Classical inputs and notation}\label{sec:classical}

The Makar--Limanov--Kennington power-concavity theorem gives
\begin{equation}\label{eq:power-concavity}
                         \sqrt{u}\ \hbox{is concave on }\Omega;
\end{equation}
see \cite{makar1971,kennington1985}.  Consequently every superlevel set of
$u$ is convex.

\begin{lemma}\label{lem:unique-max}
The torsion function of a bounded convex planar domain has a unique
maximum point $x_0$.  Moreover,
\[
                 \nabla u(x)\neq0\qquad(x\in\Omega\setminus\{x_0\}).
\]
Thus every $t\in(0,1)$ is a regular value of $U=u/M$.
\end{lemma}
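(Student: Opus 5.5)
We derive both claims from the power concavity \eqref{eq:power-concavity}: we first exclude flat pieces and non-isolated critical points by analyticity, and then count critical points with a planar index argument.

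\emph{Step 1 (maximum set).} Put $w=\sqrt{u}$, which is concave on $\Omega$. Then $\{u=M\}=\{w=\sqrt M\}$ is the set where a concave function attains its maximum, so it is a closed convex subset of $\Omega$. The function $u$ is real analytic in $\Omega$, because $-\Delta u=1$. If this set contained a segment, then $u-M$ would vanish on that segment. We would like to conclude that $u\equiv M$ on a line, and hence a contradiction with the boundary values. This is not immediate, since a harmonic function can vanish on a line without vanishing identically. We therefore argue directly. At an interior point of the segment, $\nabla u=0$, and the restriction of $u$ to the segment is constant. Hence $\partial_{\tau\tau}u=0$ along the segment, where $\tau$ is the segment's direction. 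Since $-\Delta u=1$, this forces $\partial_{\nu\nu}u=-1$, and at every point of the segment we have $D^2u=\mathrm{diag}(0,-1)$ in $(\tau,\nu)$ coordinates. Now consider the quantity $P=|\nabla u|^2+2u$, the standard $P$-function, which is subharmonic and attains its maximum at critical points in the classical setting. Evaluating it along the segment shows that $P\equiv 2M$ there. We then use the known fact that $P$ is constant along the segment only if $\Omega$ is a strip. A strip is unbounded, so this contradicts the boundedness of $\Omega$. Hence the maximum set is a single point $x_0$.

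\emph{Step 2 (no other critical points).} Suppose $\nabla u(x_1)=0$ with $x_1\neq x_0$. The superlevel sets $\{u>t\}$ are convex. On the segment from $x_1$ to $x_0$, concavity of $w$ gives
\[
w(x_1+s(x_0-x_1))\ge w(x_1)+s\,(w(x_0)-w(x_1)),
\]
and $w(x_0)>w(x_1)$ because the maximum is unique. Since $u=w^2$, we have $\nabla w=\nabla u/(2w)$, so $\nabla w(x_1)=0$ as well. For a concave function, a vanishing gradient forces a global maximum, so $w(x_1)=\max w$. This contradicts Step~1. Hence $\nabla u\neq 0$ on $\Omega\setminus\{x_0\}$.

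This concavity argument is in fact the core of the proof: it also removes any need for an index count. The one delicate point is that $w$ is differentiable at $x_1$. This holds because $u(x_1)>0$ by the strong maximum principle, so $w=\sqrt u$ is smooth near $x_1$.

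\emph{Step 3 (regular values).} By Step~2, every $t\in(0,1)$ with $tM\neq M$ is a regular value of $U$ in $\Omega$, since the only critical point $x_0$ has value $1$. This also gives a cleaner proof of uniqueness in Step~1. If the maximum set contained two points, then every point of the segment joining them would be a maximum point. Such points are critical, and they are isolated neither from each other nor from the maximum set. So the only case to exclude is a maximum set of positive length, and the argument of Step~1 excludes it.

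The main obstacle is therefore Step~1, excluding a segment of maxima. I would try first to cite the strict-concavity refinement of Korevaar--Kennington: $\sqrt u$ is strictly concave for bounded convex planar domains. If that reference is unavailable, I would fall back on the $P$-function rigidity argument sketched above.
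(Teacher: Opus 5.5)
\textbf{Gap in Step~1.} Your Step~2 is exactly the paper's argument for the second claim: a zero of $\nabla u$ is a zero of $\nabla\sqrt u$, hence a global maximum point of the concave function $\sqrt u$, hence equal to $x_0$. Step~3 is also fine. The gap is in Step~1, and the replacement argument you give there does not work.

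First, $P=|\nabla u|^2+2u$ is not subharmonic. One has $\Delta P=2|D^2u|^2-2$. For the disk, $u=(R^2-|x|^2)/4$ gives $P=R^2/2-|x|^2/4$, which is superharmonic. In any case, a subharmonic function could not attain an interior maximum, as you claim $P$ does.

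Second, the fact that $P\equiv 2M$ on the segment is tautological, since $\nabla u=0$ and $u=M$ there. So all the content sits in your ``known fact'' that this forces a strip, and that fact is not available. The Payne--Philippin rigidity statement needs $P\equiv 2M$ on an open set. That comes from a strong maximum principle for the elliptic inequality satisfied by $P$ away from critical points of $u$. Every point of your segment is a critical point, where that operator degenerates. As written, uniqueness of the maximum point is not proved.

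\textbf{The missing step.} The idea you need is the one you dismissed. The remark that a harmonic function can vanish on a line without vanishing identically concerns the two-dimensional identity theorem, which is not needed. You only need $u\equiv M$ along a line.

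Let $\ell=\{p+s\tau: s_-<s<s_+\}$ be the maximal open chord of $\Omega$ containing the segment of maximum points. The function $s\mapsto u(p+s\tau)$ is a real-analytic function of one variable on the interval $(s_-,s_+)$, being the interior-analytic $u$ composed with an affine map. It equals $M$ on a nondegenerate subinterval, hence on all of $(s_-,s_+)$. The endpoints of $\ell$ lie on $\partial\Omega$, and $u\in C(\overline\Omega)$ vanishes there. Letting $s\to s_\pm$ therefore gives $M=0$, a contradiction. This is precisely the paper's proof.

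Citing strict concavity of $\sqrt u$, a constant-rank result due to Korevaar--Lewis rather than ``Korevaar--Kennington'', would also close Step~1. However, that is a much heavier input than needed, and the self-contained fallback you propose is not valid.
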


\begin{proof}
If two distinct points maximize $u$, concavity of $\sqrt u$ forces $u$ to
equal $M$ on the segment joining them.  Restrict $u$ to the maximal open
chord of $\Omega$ containing that segment.  Interior analyticity makes this
one-variable restriction analytic; since it is constant on a nondegenerate
subinterval, it is constant on the whole chord.  Continuity up to the two
endpoints then contradicts the zero boundary condition.

If $\nabla u(x)=0$ at an interior point, then
$\nabla\sqrt u(x)=0$.  A differentiable concave function lies below each
of its tangent planes, so $x$ is a global maximum point of $\sqrt u$.
Uniqueness gives $x=x_0$.
\end{proof}

The Payne--Philippin $P$-function estimate is
\begin{equation}\label{eq:p-function}
                   |\nabla u|^2+2u\leq2M\qquad\hbox{in }\Omega.
\end{equation}
For smooth convex domains this follows from the maximum principle; the
estimate for arbitrary convex domains follows by smooth convex outer
approximation, or directly from the nonsmooth-domain maximum principle in
\cite{philippin2010}.  Under smooth outer approximation, the torsion
functions converge locally in $C^1$ and their maxima converge, which
passes \eqref{eq:p-function} to every interior point.

We also use the sharp planar P\'olya torsion--perimeter inequality
\begin{equation}\label{eq:torsion-perimeter}
          \frac{T(D)\Per(D)^2}{|D|^3}>\frac13
\end{equation}
for every bounded convex planar domain $D$.  The constant is approached by
thin rectangles and is not attained by a bounded domain; see
\cite{polya1960,amato2026}.

For completeness, we include Payne's spectral--maximum estimate
\cite{payne1981} with its strictness mechanism.

\begin{proposition}[Payne]\label{prop:payne}
Every bounded convex planar domain satisfies
\begin{equation}\label{eq:payne}
                         \lambda _1(\Omega)M(\Omega)>\frac{\pi^2}{8}.
\end{equation}
The constant is approached by elongating rectangles.
\end{proposition}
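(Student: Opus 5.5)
We follow Payne's classical argument: test the Rayleigh quotient \eqref{eq:rayleigh-definition} with a function of the torsion function, and use the $P$-function bound \eqref{eq:p-function} to compare with the one-dimensional problem on an interval. Let $w>0$ be the first eigenfunction of $\Omega$. Near each point the level sets of $u$ are regular curves (Lemma~\ref{lem:unique-max}), so $u$ serves as a transversal coordinate. The $P$-function bound then says that $u$ grows no faster than the torsion function of a strip of width $2\sqrt{2M}$, namely $\tfrac12(\sqrt{2M}^2-y^2)$, whose maximum is $M$. The first Dirichlet eigenvalue of that strip is $\pi^2/(8M)$, which is the claimed constant.

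Concretely, set $\psi=\sqrt{2M}-\sqrt{2(M-u)}$. Since $\sqrt{2(M-u)}$ is the distance-like coordinate in the model strip, \eqref{eq:p-function} gives
\[
|\nabla\psi|^2=\frac{|\nabla u|^2}{2(M-u)}\le 1 .
\]
Thus $\psi$ is $1$-Lipschitz, vanishes on $\partial\Omega$, and attains $\sqrt{2M}=:h$ at $x_0$. I would argue on the eigenfunction side. Write $w=g(\psi)$ approximately, or more robustly use the coarea formula with the level sets $\{\psi=s\}$, $s\in(0,h)$. With $f(s)=\int_{\{\psi=s\}}w^2/|\nabla\psi|$ and the distribution function $\mu(s)=|\{\psi>s\}|$, a one-dimensional Hardy/Wirtinger-type inequality on $(0,h)$ should give $\lambda_1\ge \pi^2/(4h^2)=\pi^2/(8M)$.

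The cleaner route I would actually try is Payne's. Take the first eigenfunction $w$, and let $v=w$ composed along the gradient flow of $\psi$. For each ray in the flow the one-dimensional inequality $\int_0^h v'^2\ge \frac{\pi^2}{4h^2}\int_0^h v^2$ holds for $v(0)=0$, with the Neumann-type condition at $s=h$. Then integrate using $|\nabla\psi|\le1$ and the fact, from concavity of $\sqrt u$, that the Jacobian weight along the flow is monotone; this is the step where convexity enters. Alternatively, one can use the maximum principle comparison of Payne directly: the function $\Phi=|\nabla w|^2+\lambda_1 w^2$ and its estimate $|\nabla w|^2\le\lambda_1(\max w^2-w^2)$, which holds on convex domains. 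Combine it with $u$ via a Green identity,
\[
\int_\Omega w=\lambda_1\int_\Omega u\,w ,
\]
evaluated near the maximum of $w$. I expect the main obstacle to be making this weighted one-dimensional reduction rigorous with the correct monotonicity of the level-set weights, and I would lean on the known proof in \cite{payne1981}, checking each step for planar convex domains.

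Strictness would come from the equality analysis. Equality in \eqref{eq:p-function} everywhere forces $|\nabla u|^2+2u\equiv 2M$, and the standard $P$-function rigidity then forces $u$ to depend on one variable, i.e.\ $\Omega$ is a strip, which is unbounded. For a bounded domain, strict inequality therefore holds on an open set, and the estimate becomes strict. Sharpness follows from $R_L$: as $L\to\infty$, $\lambda_1\to\pi^2$ and $M\to 1/8$, so $\lambda_1 M\to \pi^2/8$.
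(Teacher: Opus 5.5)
There is a genuine gap: the proposal never proves the inequality. It lists three routes and, by your own account, leaves the decisive one-dimensional reduction to Payne's paper.

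\textbf{Why the routes fail.} The one concrete consequence of \eqref{eq:p-function} you extract, $|\nabla\psi|\le1$, works \emph{against} a lower bound for $\lambda_1$. A $1$-Lipschitz function vanishing on $\partial\Omega$ only satisfies $\psi\le d(\cdot,\partial\Omega)$, so $h=\sqrt{2M}$ is at most the inradius $\rho$. Hence the target $\lambda_1\ge\pi^2/(4h^2)$ is \emph{stronger} than the Hersch-type bound $\pi^2/(4\rho^2)$ that a true distance function gives. No argument using only the Lipschitz bound, boundary values and maximum, even with convex level sets, can reach it. Indeed $\varepsilon\, d(\cdot,\partial\Omega)$ has all these properties with maximum $\varepsilon\rho$, and would yield $\lambda_1\ge\pi^2/(4\varepsilon^2\rho^2)$ for every $\varepsilon\in(0,1]$.

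You can see the wrong direction directly in your first two routes. Let $\partial_s$ be the derivative along the flow raising $\psi$ at unit speed. Then $|\nabla w|^2\ge|\nabla\psi|^2(\partial_s w)^2$, so coarea puts the factor $|\nabla\psi|\le1$ into the Dirichlet integral and $|\nabla\psi|^{-1}\ge1$ into $\int w^2$. Equivalently, each flow line from $\partial\Omega$ to $x_0$ has length at least $h$, so the interval inequality on it gives \emph{at most} $\pi^2/(4h^2)$. The third route is circular. The identity $\int_\Omega w=\lambda_1\int_\Omega uw$ alone gives $\lambda_1M\ge1$, and upgrading it to $\pi^2/8$ is equivalent to $\int_\Omega uw\le(8/\pi^2)M\int_\Omega w$. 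You give no way to get that from the gradient bound for $w$.

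\textbf{The missing idea.} You need a second-order use of $-\Delta u=1$. With $r=\sqrt{2(M-u)}=h-\psi$ one has $r\nabla r=-\nabla u$, hence $|\nabla r|^2+r\Delta r=1$. That is,
\[
 -\Delta\psi=\frac{1-|\nabla\psi|^2}{r}\ge0\qquad\hbox{on }\Omega\setminus\{x_0\}.
\]
The paper's test function $W=\cos\bigl(\tfrac{\pi}{2}\sqrt{1-u/M}\bigr)$ is exactly $\sin(\pi\psi/(2h))$, the strip profile composed with your $\psi$. For it, this superharmonicity more than compensates the loss from $|\nabla\psi|<1$:
\[
 -\Delta W-\frac{\pi^2}{8M}\,W=\frac{\pi^2}{8M}\bigl(1-|\nabla\psi|^2\bigr)\Bigl(\frac{\sin\theta}{\theta}-\cos\theta\Bigr)\ge0,
 \qquad \theta=\frac{\pi r}{2h}\in\bigl(0,\tfrac{\pi}{2}\bigr).
\]
This is the paper's identity $p''(u)\bigl(2(M-u)-|\nabla u|^2\bigr)\ge0$. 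Pairing it with a positive first eigenfunction $\varphi_1$ (Barta's argument) gives the non-strict inequality, with no need for any monotone level-set Jacobian.

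\textbf{Strictness.} The same pairing is what turns your rigidity remark into strictness. The defect is continuous and nonnegative, so equality forces $|\nabla u|^2=2(M-u)$ everywhere. Then $|\nabla r|=1$ and $\Delta r=0$, Bochner's identity makes $r$ affine on $\Omega\setminus\{x_0\}$, and $r(x_0)=0$ with $r\ge0$ nearby contradicts $|\nabla r|=1$. In your proposal, the sentence ``strict inequality on an open set makes the estimate strict'' has no mechanism to act on. The sharpness argument via $R_L$ is fine.
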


\begin{proof}
Put $a=\pi/2$, $z=(1-u/M)^{1/2}$, and
\[
                 w=p(u)=\cos(az).
\]
The function $p$ is smooth on $[0,M]$, $p(0)=0$, and
\[
 p'=\frac{a\sin(az)}{2Mz},\qquad
 p''=\frac{a\{\sin(az)-az\cos(az)\}}{4M^2z^3}>0,
\]
where the values at $z=0$ are understood by continuity.  Direct
differentiation also gives
\begin{equation}\label{eq:p-identity}
             p'-2(M-u)p''=\frac{a^2}{2M}p.
\end{equation}
Since $w\in H_0^1(\Omega)$, the chain rule and
\eqref{eq:p-function} yield, in the weak sense,
\begin{equation}\label{eq:payne-super}
 -\Delta w-\frac{\pi^2}{8M}w
 =p''(u)\bigl(2(M-u)-|\nabla u|^2\bigr)\geq0.
\end{equation}
Let $\varphi_1>0$ be a first Dirichlet eigenfunction.  Pairing
\eqref{eq:payne-super} with $\varphi_1$ gives
\[
 0\leq\left(\lambda _1-\frac{\pi^2}{8M}\right)
             \int_\Omega w\varphi_1,
\]
and hence the non-strict form of \eqref{eq:payne}.

Suppose equality holds.  The nonnegative right-hand side of
\eqref{eq:payne-super} has zero pairing with the strictly positive
$\varphi_1$.  Since $p''>0$, continuity in the interior gives
\[
                    |\nabla u|^2=2(M-u)\quad\hbox{in }\Omega.
\]
On $\Omega\setminus\{x_0\}$ set $r=\sqrt{2(M-u)}$.  Then
$|\nabla r|=1$ and, because $u=M-r^2/2$ and $\Delta u=-1$,
\[
                  |\nabla r|^2+r\Delta r=1,
                  \qquad\Delta r=0.
\]
Bochner's identity gives $D^2r=0$.  The punctured convex domain
$\Omega\setminus\{x_0\}$ is connected in the plane, so $r$ is affine
there.  An affine function vanishing at $x_0$ and nonnegative in a full
neighborhood of $x_0$ has zero gradient, contradicting $|\nabla r|=1$.
Thus equality is impossible.  All steps above are interior or weak
arguments, so no boundary smoothness beyond convexity is needed.
\end{proof}

We use the planar mixed-area convention
\begin{equation}\label{eq:mixed-expansion}
         |C+tD|=|C|+2tV(C,D)+t^2|D|.
\end{equation}
If $S_C$ denotes surface-area measure and $h_C$ the support function, then
\begin{equation}\label{eq:mixed-integral}
  V(C,D)=\frac12\int_{\Sph^1}h_C\dd S_D
        =\frac12\int_{\Sph^1}h_D\dd S_C.
\end{equation}
Mixed area is symmetric, translation invariant, homogeneous, and monotone
in each argument; see \cite{schneider2014}.

\section{A sharp inequality for directional maximum profiles}\label{sec:ridge}

\begin{lemma}[One-sided profile inequality]\label{lem:ridge}
Let $v:[0,L]\to[0,M]$ be absolutely continuous, with $v(0)=0$ and
$v(L)=M$.  Suppose that $\sqrt v$ is concave and nondecreasing and that
\begin{equation}\label{eq:ridge-obstacle-x}
                   v'(x)^2\leq2(M-v(x))\quad\hbox{for a.e. }x.
\end{equation}
Then
\begin{equation}\label{eq:ridge-ineq}
              \int_0^L\bigl(2v-v'^2\bigr)\dd x\geq\frac{2M}{3}L.
\end{equation}
\end{lemma}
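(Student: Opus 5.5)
The plan is to change variables so that the gradient obstacle \eqref{eq:ridge-obstacle-x} becomes a Lipschitz condition. Put
\[
 w=\sqrt{2(M-v)},\qquad v=M-\tfrac12w^2,\qquad v'=-ww'.
\]
Then \eqref{eq:ridge-obstacle-x} reads $|w'|\le1$ a.e. Since $v$ is nondecreasing, $w$ is nonincreasing, with $w(0)=\sqrt{2M}=:a$ and $w(L)=0$. Two consequences follow at once: $L\ge a$ and $w(x)\le L-x$.

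The integrand becomes
\[
 2v-v'^2=2M-w^2-w^2w'^2 .
\]
This also identifies the equality case, which serves as a guide. When $L=a$ and $w=L-x$, i.e. $v=M-\tfrac12(L-x)^2$, one checks directly that $\int_0^L(2v-v'^2)=2ML-\tfrac23L^3=\tfrac23ML$. So the inequality is sharp exactly at the minimal length, and the proof must use both constraints.

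\textbf{Step 1 (the gradient term).} Since $w$ is monotone and $|w'|\le1$,
\[
\int_0^L w^2w'^2\le\int_0^L w^2|w'|=\frac{a^3}{3}.
\]

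\textbf{Step 2 (the zeroth-order term).} Concavity of $\sqrt v$ with $\sqrt{v(0)}=0$ gives $\sqrt v\ge\sqrt M\,x/L$. Hence $w^2\le a^2(1-x^2/L^2)$, and together with $w\le L-x$,
\[
 w^2\le m(x):=\min\bigl(a^2(1-x^2/L^2),(L-x)^2\bigr).
\]
A direct computation, with crossing point $y_*=2a^2L/(L^2+a^2)$ in the variable $y=L-x$, gives
\[
\int_0^L m=\frac{2a^2L}{3}-\frac{4a^6L}{3(L^2+a^2)^2}.
\]
Combining Steps 1 and 2, the target $\int(2v-v'^2)\ge\frac{a^2L}{3}$ reduces to the scalar inequality $4a^3L\ge(L^2+a^2)^2$. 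This holds, with equality, at $L=a$, and therefore in a range $a\le L\le cL$-type window near $a$.

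\textbf{Step 3 (the main obstacle: large $L$).} For large $L$ the scalar inequality fails, because the two crude estimates are lossy simultaneously. Step 1 charges the full $a^3/3$, although when $L\gg a$ the function $w$ cannot drop at unit slope while also satisfying $w^2\le a^2(1-x^2/L^2)$ over most of the interval. I would fix this by not separating the two terms. I would keep the pointwise quantity
\[
 w^2(1+w'^2)\le w^2+w^2|w'|
\]
and use the concavity bound in derivative form: for concave $s=\sqrt v$ with $s(0)=0$ one has $s'\le s/x$, so
\[
 |v'|=2ss'\le 2v/x .
\]
This bound is sharp only near $x=L$ when $L$ is large.

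Concretely, I would split at the point $x_1$ where $w(x_1)=L-x_1$ could first become active. On $[0,x_1]$ I would estimate $2v-v'^2\ge 2v-\min\bigl(2(M-v),4v^2/x^2\bigr)$ using $v\ge Mx^2/L^2$. On $[x_1,L]$ I would use the Step 1 bound. The resulting function of $L$ then has to be checked to be at least $\tfrac23ML$.

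If this bookkeeping fails, the fallback is a variational reduction. For fixed $L$, minimize the functional $\int(2M-w^2-w^2w'^2)$ over the admissible class. Then show, by a local rearrangement that increases $w$ where the obstacle is slack, that a minimizer consists of a concave-$\sqrt v$ chord piece followed by an obstacle piece $|w'|=1$. This reduces the inequality to an explicit one-parameter family, where the bound $\tfrac23ML$ can be verified by calculus.
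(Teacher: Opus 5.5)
\textbf{There is a genuine gap: Steps 1--2 prove only the trivial case $L=a$, and Step 3 cannot succeed as planned.} The scalar inequality $4a^3L\ge(L^2+a^2)^2$ fails for every $L>a$. Both sides agree at $L=a$, and for $L\ge a$ the right side grows at rate $4L(L^2+a^2)\ge 8a^3>4a^3$, so there is no window near $a$. At $L=a$ itself, the constraint $|w'|\le1$ already forces $w=L-x$, so Steps 1--2 give nothing beyond an identity.

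More seriously, your premise that the inequality is sharp only at the minimal length is false, and this is what sinks Step 3. For every $L>\sqrt{2M}$ the profile
\[
v(x)=\frac{Mx^2}{L^2-2M}\quad(0\le x\le x_c),\qquad v(x)=M-\tfrac12(L-x)^2\quad(x_c\le x\le L),\qquad x_c=L-\frac{2M}{L},
\]
is admissible: $\sqrt v$ is linear and then an ellipse arc, joined in $C^1$, and the obstacle holds on the first piece exactly up to $x_c$. It gives equality. For instance, with $M=1$ and $L=2$ the integrand is $0$ on $[0,1]$ and $2-2(2-x)^2$ on $[1,2]$, for a total of $4/3=\tfrac23ML$. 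Hence any chain of estimates that proves the lemma must be an equality at every step on each of these profiles. Yours is not:
\begin{itemize}
\item Your split point $x_1=L(L^2-a^2)/(L^2+a^2)$ satisfies $x_1/x_c=L^2/(L^2+2M)<1$. On $(x_1,x_c)$ the extremal has $0<|w'|<1$, so the Step~1 bound $w^2w'^2\le w^2|w'|$ is strict there. Your final lower bound therefore lies strictly below $\tfrac23ML$.
\item The estimate on $[0,x_1]$ is also lossy. The extremal satisfies $v>Mx^2/L^2$ there. Because $v\mapsto 2v-\min\{2(M-v),4v^2/x^2\}$ is not monotone, you cannot simply insert the chord bound. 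The honest minimum over $v\in[Mx^2/L^2,M]$ is close to $-2M$ for small $x$, whereas the extremal integrand is $O(x^2)$.
\end{itemize}

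The fallback guesses the correct extremal shape: these chord-plus-obstacle profiles are exactly the equality cases. But showing that a minimizer has this shape is the entire content of the lemma, and you do not specify the local rearrangement. Raising $w$ where the obstacle is slack also changes $w'^2$, and it must preserve concavity of $\sqrt v$ and the endpoint values. You give no reason why the functional decreases.

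The missing idea is a change of variables in which the estimate is exact on the whole extremal family. Invert the profile: set $q=\sqrt{v/M}$ and $\xi=x/\sqrt M$. Then:
\begin{itemize}
\item concavity of $\sqrt v$ says that $\mathcal A=d\xi/dq$ is nondecreasing;
\item the obstacle becomes the pointwise bound $\mathcal A\ge\sqrt2\,q/\sqrt{1-q^2}$;
\item the deficit $\int_0^L(2v-v'^2)\,dx-\tfrac23ML$ equals $M^{3/2}\int_0^1\bigl[(2q^2-\tfrac23)\mathcal A-4q^2/\mathcal A\bigr]\,dq$.
\end{itemize}
The weight $2q^2-\tfrac23$ has mean zero on $[0,1]$, and each sublevel set $\{\mathcal A<s\}$ is an initial interval of length $Q(s)\le s/\sqrt{s^2+2}$. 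A layer-cake decomposition in the values of $\mathcal A$ then turns the last integral into $\tfrac23\int_0^\infty Q\bigl(1-Q^2-2Q^2/s^2\bigr)\,ds$, whose integrand is nonnegative for every $s$.
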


\begin{proof}
Set
\[
       q=\sqrt{v/M},\qquad \xi=x/\sqrt M.
\]
If $v=M$ on a terminal interval of length $\ell$, deleting that interval
decreases the left side of \eqref{eq:ridge-ineq} by $2M\ell$ and the right
side by only $2M\ell/3$.  Thus the deficit decreases by
$4M\ell/3$, and it is enough to treat the case without a terminal
plateau.

Now $q$ is strictly increasing from $0$ to $1$.  Its inverse
$\xi=\xi(q)$ is a finite convex function.  Hence it is absolutely
continuous, its a.e. derivative
\[
                        \mathcal A(q)=\frac{d\xi}{dq}
\]
has a nondecreasing representative, and the a.e. inverse derivative
formula is valid.  Since
\[
                v'=\frac{2\sqrt M\,q}{\mathcal A(q)},
\]
condition \eqref{eq:ridge-obstacle-x} is equivalent to
\begin{equation}\label{eq:ridge-obstacle-q}
       \mathcal A(q)\geq\mathcal A_0(q)
       :=\frac{\sqrt2\,q}{\sqrt{1-q^2}}.
\end{equation}
The monotone change-of-variables formula gives
\begin{align}
 \frac1{M^{3/2}}\left\{
   \int_0^L(2v-v'^2)\dd x-\frac{2M}{3}L\right\}
  =\int_0^1\left[
       \left(2q^2-\frac23\right)\mathcal A(q)
       -\frac{4q^2}{\mathcal A(q)}\right]\dd q.
                                                        \label{eq:deficit}
\end{align}

For $s>0$ define the lengths
\[
 Q_-(s)=\bigl|\{q\in(0,1):\mathcal A(q)<s\}\bigr|,
 \qquad
 Q_+(s)=\bigl|\{q\in(0,1):\mathcal A(q)\leq s\}\bigr|.
\]
Because $\mathcal A$ is nondecreasing, these are the lengths of initial
intervals.  They agree except when the level set $\{\mathcal A=s\}$ has
positive measure, and there are at most countably many such values.  Write
their common a.e. value as $Q(s)$.  Since
$\int_0^1(2q^2-2/3)\dd q=0$, Fubini's theorem, applied to an absolutely
integrable function, gives
\begin{equation}\label{eq:layer-one}
 \int_0^1\left(2q^2-\frac23\right)\mathcal A(q)\dd q
   =\frac23\int_0^\infty Q(s)(1-Q(s)^2)\dd s.
\end{equation}
Tonelli's theorem and
$\mathcal A^{-1}=\int_{\mathcal A}^{\infty}s^{-2}\dd s$ give
\begin{equation}\label{eq:layer-two}
 \int_0^1\frac{4q^2}{\mathcal A(q)}\dd q
   =\frac43\int_0^\infty\frac{Q(s)^3}{s^2}\dd s.
\end{equation}
The integrals are legitimate because $\mathcal A\in L^1(0,1)$ and
\eqref{eq:ridge-obstacle-q} controls the second integrand near both
endpoints.  Moreover, the same obstacle implies
\begin{equation}\label{eq:Q-obstacle}
             Q(s)\leq\mathcal A_0^{-1}(s)
                   =\frac{s}{\sqrt{s^2+2}}.
\end{equation}
Combining \eqref{eq:deficit}--\eqref{eq:Q-obstacle} yields
\[
 \frac23\int_0^\infty
 Q(s)\left(1-Q(s)^2-\frac{2Q(s)^2}{s^2}\right)\dd s\geq0,
\]
which proves \eqref{eq:ridge-ineq}.
\end{proof}

\begin{remark}\label{rem:equality-ridge}
With no positive-length terminal plateau, the equality profiles in
Lemma~\ref{lem:ridge} are, after choosing the nondecreasing representative,
\[
       \mathcal A(q)=\max\left\{c,
                   \frac{\sqrt2q}{\sqrt{1-q^2}}\right\}
       \quad\hbox{for a.e. }q,\qquad c\geq0.
\]
Thus the constant is already encoded by a linear $\sqrt v$ branch followed
by a branch saturating the $P$-function obstacle.
Indeed, equality in the final nonnegative integral in the proof forces
\[
 Q(s)\in\left\{0,\frac{s}{\sqrt{s^2+2}}\right\}
 \quad\hbox{for a.e. }s>0.
\]
The monotonicity of $Q$ gives a single threshold $c$ between these two
alternatives, and inversion yields the displayed formula for
$\mathcal A$.  Conversely, substituting that formula into
\eqref{eq:deficit} gives equality.
\end{remark}

\section{Airy realization and torsion efficiency}\label{sec:airy}

We first assume that $\Omega$ is $C^\infty$ and strictly convex.  Let
$n=n_\Omega$ denote its outward unit normal, set
$g=-\partial_n u>0$ on $\partial\Omega$, and define the symmetric stress
\begin{equation}\label{eq:stress}
       \Sigma=(|\nabla u|^2-2u)\id-2\nabla u\otimes\nabla u.
\end{equation}
A direct differentiation using $\Delta u=-1$ gives, with repeated indices
summed,
\[
 \partial_j\Sigma_{ij}
 =2u_k u_{ki}-2u_i-2u_{ij}u_j-2u_i\Delta u=0.
\]
Thus
\begin{equation}\label{eq:div-stress}
                        \operatorname{div}\Sigma=0.
\end{equation}
Let $J$ be counterclockwise rotation through $\pi/2$.

\begin{lemma}[Airy realization]\label{lem:airy-body}
There is an Airy potential $\phi\in C^\infty(\overline\Omega)$, unique
modulo affine functions,
such that
\begin{equation}\label{eq:airy}
                         \Sigma=J^TD^2\phi J.
\end{equation}
If $X=-\nabla\phi$, then $X|_{\partial\Omega}$ parametrizes, up to
translation, the boundary of a smooth strictly convex body $K$, and
\begin{equation}\label{eq:surface-measure}
       S_K=(n_\Omega)_\#(g^2\dd s).
\end{equation}
Here $\#$ denotes push-forward of measures.
\end{lemma}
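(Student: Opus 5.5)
Since $\operatorname{div}\Sigma=0$ by \eqref{eq:div-stress} and $\Omega$ is simply connected, we construct $\phi$ in two steps. Each row $(\Sigma_{i1},\Sigma_{i2})$ is divergence free, so the rotated field $J^T(\Sigma_{i1},\Sigma_{i2})^T$ is curl free. Hence there are potentials $\psi_i\in C^\infty(\overline\Omega)$ with $\Sigma_{i\cdot}=J\nabla\psi_i$ (up to sign conventions). Symmetry of $\Sigma$, namely $\Sigma_{12}=\Sigma_{21}$, makes the vector field $J^T(\psi_1,\psi_2)$ curl free as well, so it is a gradient $\nabla\phi$. Unwinding gives $\Sigma=J^TD^2\phi J$; in coordinates this is $\Sigma_{11}=\phi_{22}$, $\Sigma_{22}=\phi_{11}$, $\Sigma_{12}=-\phi_{12}$. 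Smoothness up to the boundary follows because $u\in C^\infty(\overline\Omega)$ for a smooth domain. If $D^2\phi=D^2\tilde\phi$, then $\phi-\tilde\phi$ is affine, which gives uniqueness modulo affine functions. In particular $X=-\nabla\phi$ is determined up to an additive constant, which is the ``up to translation'' in the statement.

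Next we compute $dX/ds$ along $\partial\Omega$. Let $t=Jn$ be the counterclockwise unit tangent. Then $dX/ds=-D^2\phi\,t=-J\Sigma J^T t=-J\Sigma n$, using $J^T t = J^TJn=n$. On $\partial\Omega$ we have $u=0$ and $\nabla u=-gn$, so
\[
\Sigma n=(g^2)n-2g^2 n=-g^2n .
\]
Therefore $dX/ds=g^2Jn=g^2t$, which is the identity quoted in the introduction. Thus $X|_{\partial\Omega}$ is a closed curve (being the restriction of a single-valued function) whose tangent is a positive multiple of the tangent of $\partial\Omega$, and $g>0$ by Hopf's lemma. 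Its unit tangent is $t(s)$, which rotates monotonically through exactly $2\pi$ with curvature $\kappa_\Omega>0$, by strict convexity. A closed $C^\infty$ curve whose tangent angle is strictly increasing with total turning $2\pi$ is a simple, strictly convex curve. It therefore bounds a smooth strictly convex body $K$, with curvature $\kappa_K=\kappa_\Omega/g^2>0$ and outer normal at $X(s)$ equal to $n_\Omega(s)$.

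For \eqref{eq:surface-measure}, recall that $S_K$ is the push-forward of arclength on $\partial K$ under the Gauss map. Arclength on $\partial K$ is $|dX/ds|\,ds=g^2\,ds$, and the normal of $K$ at $X(s)$ is $n_\Omega(s)$. Hence $S_K=(n_\Omega)_\#(g^2\dd s)$. As a sanity check, $\int n\,g^2ds=0$ holds automatically because $X$ closes up, consistent with Minkowski's condition.

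The main obstacle I expect is bookkeeping rather than analysis. The signs and orientation in \eqref{eq:airy} have to be consistent, so that the outward normal of $K$ equals $n_\Omega$ rather than $-n_\Omega$, and so that $X$ traverses $\partial K$ counterclockwise. I would fix the coordinate convention $J^TD^2\phi J=\operatorname{cof}$-type matrix $\begin{pmatrix}\phi_{22}&-\phi_{12}\\-\phi_{12}&\phi_{11}\end{pmatrix}$ first, and then verify $dX/ds=+g^2t$ directly in coordinates. Boundary regularity of $u$, which is standard Schauder theory for smooth domains, supplies the $C^\infty(\overline\Omega)$ claim.
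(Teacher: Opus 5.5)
Your proposal is correct and follows the paper's proof essentially step for step: your potentials $\psi_2$ and $-\psi_1$ are exactly the paper's primitives $p$ and $q$ of the closed forms $c\,dx-b\,dy$ and $-b\,dx+a\,dy$. The traction identity $\Sigma n=-g^2n$, the formula $dX/ds=g^2t$, the tangent-angle characterization of convex curves, and $ds_K=g^2\,ds$ are likewise the same. One small imprecision: strict convexity only guarantees that the tangent angle is constant on no nontrivial interval, not that $\kappa_\Omega>0$ everywhere. Your conclusion uses only the former, however, so nothing breaks.
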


\begin{proof}
Write $\Sigma=\left(\begin{smallmatrix}a&b\\b&c\end{smallmatrix}\right)$.
The two divergence equations say
$a_x+b_y=0$ and $b_x+c_y=0$.  Hence the one-forms
$c\dd x-b\dd y$ and $-b\dd x+a\dd y$ are closed.  Since a convex domain
is simply connected, they are $\dd p$ and $\dd q$ for some functions
$p,q$.  The identity $p_y=q_x=-b$ then gives a function $\phi$ with
$\phi_x=p$, $\phi_y=q$.  Thus
$\phi_{xx}=c$, $\phi_{xy}=-b$, $\phi_{yy}=a$, which is
\eqref{eq:airy}.  The ambiguity is affine.
Because $u\in C^\infty(\overline\Omega)$ for a smooth domain, the
coefficients of $\Sigma$ are smooth up to the boundary, and the preceding
primitive construction gives $\phi\in C^\infty(\overline\Omega)$.

On $\partial\Omega$, $\nabla u=-gn$ and therefore
\begin{equation}\label{eq:traction}
                         \Sigma n=-g^2n.
\end{equation}
If $t=Jn$ is the counterclockwise unit tangent and $s$ is arclength, then
\eqref{eq:airy}--\eqref{eq:traction} give
\begin{equation}\label{eq:X-boundary}
       \frac{dX}{ds}=-D^2\phi\,t=g^2t.
\end{equation}
Because $X=-\nabla\phi$ is single-valued, its restriction to the closed
curve $\partial\Omega$ is closed.  Equation \eqref{eq:X-boundary} and
$g>0$ show that it is a regular $C^\infty$ curve with the same tangent and
outward normal as $\partial\Omega$.  Its tangent angle is nondecreasing,
has total increment $2\pi$, and is constant on no nontrivial interval,
because $\Omega$ is strictly convex.  The tangent-angle characterization
of convex curves therefore shows that $X(\partial\Omega)$ is the
positively oriented boundary of a smooth strictly convex body $K$.
Moreover, its arclength element is $\dd s_K=g^2\dd s$, so directly
\[
                  S_K=(n_\Omega)_\#(g^2\dd s).
\]
In particular, the boundary points of $\Omega$ with normal $\pm e$ are
carried to the support points of $K$ with the same respective normals.
The same conclusion also follows from the balance identity: the divergence
theorem and \eqref{eq:traction} give
\[
       \int_{\Sph^1}\theta\dd S_K(\theta)
       =\int_{\partial\Omega}g^2n\dd s
       =-\int_{\partial\Omega}\Sigma n\dd s=0.
\]
The planar Minkowski uniqueness theorem also shows that this $K$ is
determined by \eqref{eq:surface-measure} up to translation.
\end{proof}

We next compare the support functions of $K$ and $\Omega$.  Fix
$e\in\Sph^1$ and use positively oriented coordinates $(e,Je)$, with
coordinates $(x,y)$.  Let $(a,b)$ be the projection of $\Omega$ on the
$x$-axis and define
\begin{equation}\label{eq:fibre-max}
                 v(x)=\max\{u(x,y):(x,y)\in\Omega\},
                 \qquad a<x<b.
\end{equation}
We call $v$ the directional maximum profile associated with $e$.
The hypograph of $\sqrt v$ is the projection of the convex hypograph of
$\sqrt u$, so
\begin{equation}\label{eq:v-concave}
                         \sqrt v\ \hbox{is concave on }(a,b).
\end{equation}
The fiber maximizer $y=y(x)$ is unique.  Indeed, two maximizers would make
$\sqrt u(x,\cdot)$ constant between them; one-variable analyticity would
then make $u$ constant on the whole open fiber, contrary to the boundary
values.  Strict convexity makes the support points over $a$ and $b$ unique.
Compactness and $u\in C(\overline\Omega)$ show that as $x\to a$ or
$x\to b$, every fiberwise maximizing point tends to the corresponding
support point.
Consequently $v$ extends continuously to $[a,b]$ with
\begin{equation}\label{eq:v-endpoints}
                         v(a)=v(b)=0.
\end{equation}

Define
\begin{equation}\label{eq:z-def}
                         z(x)=X(x,y(x))\cdot e.
\end{equation}

\begin{lemma}[Identity for directional maximum profiles]\label{lem:airy-ridge}
The function $z$ is locally Lipschitz on $(a,b)$ and
\begin{equation}\label{eq:ridge-identity}
                         z'(x)=2v(x)-v'(x)^2
                         \quad\hbox{for a.e. }x\in(a,b).
\end{equation}
Moreover, $z$ is $2M$-Lipschitz on $(a,b)$ and extends continuously to
$[a,b]$.
\end{lemma}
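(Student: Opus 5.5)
The plan is to compute $e\cdot dX$ directly from the Airy relation \eqref{eq:airy}, and to evaluate increments of $z$ by splitting each displacement of the ridge point into a horizontal step and a vertical step. This avoids differentiating the fiber maximizer $y(x)$, which need not be differentiable where $u_{yy}=0$.

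\emph{Derivatives of $X\cdot e$.} From \eqref{eq:airy}, $D^2\phi=J\Sigma J^T$, so $e\cdot\partial_w X=-(J^Te)\cdot\Sigma\,(J^Tw)$. In the coordinates $(x,y)$ with $e=e_x$, $Je=e_y$, a short computation using \eqref{eq:stress} gives
\[
 e\cdot\partial_xX=-\Sigma_{yy}=2u+u_y^2-u_x^2,\qquad
 e\cdot\partial_yX=\Sigma_{xy}=-2u_xu_y .
\]
At a fiber maximizer, $u_y(x,y(x))=0$ because it is an interior maximum in $y$. So formally $z'=2u-u_x^2$ evaluated on the ridge. It remains to justify this and to identify $u_x(x,y(x))$ with $v'(x)$.

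\emph{Justification.} First, $y(\cdot)$ is continuous on $(a,b)$, by uniqueness of the fiber maximizer and compactness. Second, $v$ is locally Lipschitz, since $\sqrt v$ is concave by \eqref{eq:v-concave} and positive inside. Third, at every point $x$ where $v$ is differentiable, $v'(x)=u_x(x,y(x))$. This follows because $t\mapsto u(t,y(x))$ lies below $v$ and touches it at $t=x$.

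Now write
\[
 z(x+h)-z(x)=e\cdot\bigl[X(x+h,y_h)-X(x,y_h)\bigr]+e\cdot\bigl[X(x,y_h)-X(x,y(x))\bigr],\qquad y_h=y(x+h).
\]
The horizontal term is $\int_x^{x+h}(2u+u_y^2-u_x^2)(t,y_h)\dd t$. By continuity of $\nabla u$ and of $y$, together with $u_y(x,y(x))=0$, it equals $h\,(2v(x)-u_x(x,y(x))^2)+o(h)$.

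The vertical term is bounded by $2\|\nabla u\|_\infty\int|u_y(x,s)|\dd s$, with the integral taken between $y(x)$ and $y_h$. The key observation is that $\sqrt{u(x,\cdot)}$ is concave, so $u(x,\cdot)$ is monotone on the segment between $y_h$ and the maximizer $y(x)$. Hence the integral equals exactly
\[
 v(x)-u(x,y_h)=\bigl[v(x)-v(x+h)\bigr]+\bigl[u(x+h,y_h)-u(x,y_h)\bigr].
\]
This is $O(|h|)$, which gives the local Lipschitz property of $z$. At differentiability points of $v$ it equals $-hv'(x)+h\,u_x(x,y(x))+o(h)=o(h)$, by the mean value theorem and continuity of $y$. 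Combining the two steps with $v'(x)=u_x(x,y(x))$ gives \eqref{eq:ridge-identity} a.e.

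I expect this control of the vertical step, via unimodality on fibers, to be the main obstacle. A naive Taylor estimate would need $|y_h-y(x)|=o(h^{1/2})$, which is not available at degenerate points.

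\emph{Global bound and extension.} By \eqref{eq:p-function}, $v'^2\le|\nabla u|^2\le 2M-2v$ at ridge points. Hence
\[
 -2M\le 4v-2M\le 2v-v'^2\le 2v\le 2M .
\]
A locally Lipschitz function is absolutely continuous on compact subintervals, so $z$ is $2M$-Lipschitz on $(a,b)$. It is therefore uniformly continuous and extends continuously to $[a,b]$.
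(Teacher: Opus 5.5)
Your proof is correct and follows essentially the same route as the paper. Both use the formulas $e\cdot\partial_xX=-\Sigma_{yy}$ and $e\cdot\partial_yX=-2u_xu_y$, unimodality of $u$ on fibers to bound the vertical step by $v-u$, the Danskin identification $v'=u_x$ and $u_y=0$ on the ridge, and the $P$-function bound to integrate to the global $2M$-Lipschitz constant. The only difference is the order of the steps: you move horizontally at height $y(x+h)$ and vertically in the fiber over $x$, which is why you need continuity of $y(\cdot)$, whereas the paper moves horizontally at height $y(x)$ and vertically in the fiber over $x+h$, where $v(x+h)-u(x+h,y(x))=o(h)$ follows directly from Danskin's formula.
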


\begin{proof}
On a compact subinterval of $(a,b)$ all fiberwise maximizing points remain
in a compact subset of $\Omega$.  There, bound $|u_x|$ and
$|\phi_{xx}|$ by a constant $C$.  From \eqref{eq:stress} and
\eqref{eq:airy},
\[
                 \partial_y(X\cdot e)=-\phi_{xy}=-2u_xu_y.
\]
Along each fiber, concavity of $\sqrt u$ makes $u$ unimodal.  Hence
\begin{equation}\label{eq:vertical-X}
 |X(x,y)\cdot e-z(x)|
 \leq2C\,|v(x)-u(x,y)|.
\end{equation}
For $x_2=x_1+h$ and $|h|$ sufficiently small, the point
$(x_2,y(x_1))$ lies in the neighboring fiber, and
\[
 0\leq v(x_2)-u(x_2,y(x_1))
 \leq |v(x_2)-v(x_1)|
       +|u(x_2,y(x_1))-u(x_1,y(x_1))|=O(|h|).
\]
Together with \eqref{eq:vertical-X} and the horizontal bound on
$X\cdot e$, this proves local Lipschitz continuity.

At almost every $x$, both $v$ and $z$ are differentiable.  Danskin's
formula and fiberwise maximality give
\begin{equation}\label{eq:danskin}
       v'(x)=u_x(x,y(x)),\qquad u_y(x,y(x))=0.
\end{equation}
At such an $x$, the vertical error in the difference quotient is $o(h)$:
indeed,
$v(x+h)-u(x+h,y(x))=o(h)$ by \eqref{eq:danskin}, and then
\eqref{eq:vertical-X} applies.  Therefore
\[
 z'(x)=-\phi_{xx}(x,y(x)).
\]
Since $\Sigma_{yy}=\phi_{xx}=u_x^2-u_y^2-2u$, formula
\eqref{eq:ridge-identity} follows from \eqref{eq:danskin}.

Finally, \eqref{eq:p-function} gives $v'^2\leq2(M-v)$, and hence
\[
       -2M\leq2v-v'^2\leq2M.
\]
On every compact subinterval, integration of
\eqref{eq:ridge-identity} gives the same $2M$ Lipschitz bound.  It follows
for every pair of interior points, and therefore gives the asserted
extension to the endpoints.
\end{proof}

\begin{proposition}[Support-function containment]\label{prop:containment}
Let $\Omega$ be smooth and strictly convex, let $K$ be the body supplied
by Lemma~\ref{lem:airy-body}, and let $x_0$ be the maximum point of $u$.
With $X_0=X(x_0)$, one has
\[
                   K-X_0\supset\frac{2M}{3}(\Omega-x_0).
\]
\end{proposition}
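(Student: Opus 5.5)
The containment will follow from comparing support functions direction by direction. Since both sides are convex bodies, it suffices to show that for every $e\in\Sph^1$
\[
 h_K(e)-X_0\cdot e\;\geq\;\frac{2M}{3}\bigl(h_\Omega(e)-x_0\cdot e\bigr).
\]
So I fix $e$ and use the coordinates $(x,y)$ of Section~\ref{sec:ridge}'s setting, with $(a,b)$ the projection of $\Omega$ onto the $e$-axis. Let $v$ be the directional maximum profile \eqref{eq:fibre-max} and $z$ the function \eqref{eq:z-def}. Then $h_\Omega(e)=b$. By Lemma~\ref{lem:airy-body}, the support point of $K$ with normal $e$ is $X(p_b)$, where $p_b$ is the support point of $\Omega$ over $b$. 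Since $z$ extends continuously to $b$ and the fiber maximizers tend to $p_b$, we get $h_K(e)=X(p_b)\cdot e=z(b)$. Because $K$ is only defined up to translation, I will use the fixed normalization $X=-\nabla\phi$ and record $X_0=X(x_0)$ in that same normalization.

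The next step is to locate $x_0$ in the fiber picture. The point $x_0=(x^*,y^*)$ is the global maximum, so $v(x^*)=M$, and $x_0$ is the fiber maximizer at $x^*$. Hence $z(x^*)=X_0\cdot e$. Also $x_0\cdot e=x^*$. The required inequality therefore becomes
\[
 z(b)-z(x^*)\geq\frac{2M}{3}(b-x^*).
\]
By Lemma~\ref{lem:airy-ridge}, $z$ is Lipschitz with $z'=2v-v'^2$ a.e., so the left side equals $\int_{x^*}^b(2v-v'^2)\,dx$.

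This integral is exactly the one in Lemma~\ref{lem:ridge}, after reflecting with $x\mapsto b-x$ so that the profile runs from $0$ at $b$ up to $M$ at $x^*$, on an interval of length $L=b-x^*$. I check the hypotheses of Lemma~\ref{lem:ridge} one by one.
\begin{itemize}
\item The endpoint values: $v(b)=0$ by \eqref{eq:v-endpoints}, and $v(x^*)=M$.
\item Concavity: $\sqrt v$ is concave by \eqref{eq:v-concave}.
\item Monotonicity: a concave function attaining its maximum at $x^*$ is nonincreasing on $[x^*,b]$, so the reflected $\sqrt v$ is nondecreasing.
\item The obstacle: $v'^2\leq 2(M-v)$ follows from \eqref{eq:danskin} and the $P$-function bound \eqref{eq:p-function}, as already noted in the proof of Lemma~\ref{lem:airy-ridge}.
\item Absolute continuity: $v$ is locally Lipschitz, being $M$ times the square of a concave function, and it is continuous on the closed interval. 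A monotone continuous function with a.e. derivative bounded by $\sqrt{2M}$, thanks to the obstacle, is Lipschitz, so $v$ is absolutely continuous.
\end{itemize}
Lemma~\ref{lem:ridge} then gives the bound. Applying it for every $e$ yields $h_{K-X_0}\geq \frac{2M}{3}h_{\Omega-x_0}$, which is the containment.

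I expect the main obstacle to be the boundary bookkeeping. The identity $\int_{x^*}^b z'=z(b)-z(x^*)$ relies on the continuous extension of $z$ to $b$, which Lemma~\ref{lem:airy-ridge} supplies. It also relies on that extended value being $X(p_b)\cdot e$, and not some other limit. To justify the latter, I would argue that the fiber maximizers $(x,y(x))$ converge to $p_b$, and that $X$ is continuous on $\overline\Omega$ because $\phi\in C^\infty(\overline\Omega)$. These two facts give $z(x)\to X(p_b)\cdot e$. The degenerate case $x^*=b$ cannot occur, since $x_0$ is an interior point.
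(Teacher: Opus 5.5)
Your proposal is correct and follows the paper's proof essentially step for step. You identify $h_{K-X_0}(e)$ with $z(b)-z(x^*)=\int_{x^*}^b(2v-v'^2)\dd x$ via Lemma~\ref{lem:airy-body} and Lemma~\ref{lem:airy-ridge}, then apply Lemma~\ref{lem:ridge} to the reflected flank. Treating only the right flank for each $e$ is a harmless economy over the paper's two-flank version, since the left flank for $e$ is the right flank for $-e$.

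One wording fix: a monotone continuous function with bounded a.e.\ derivative need not be Lipschitz (the Cantor function is a counterexample). The step actually rests on your earlier observation that $v$ is locally Lipschitz on the open interval. Together with $|v'|\leq\sqrt{2M}$ and continuity at $b$, that gives the Lipschitz bound on $[x^*,b]$.
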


\begin{proof}
Let $x^*=x_0\cdot e$ and put $X_0=X(x_0)$.  At $x=x^*$ the fiberwise
maximizing point is $x_0$.  By Lemma~\ref{lem:airy-body}, continuity of
$X$ to the boundary, and the endpoint convergence of the fiberwise
maximizing points,
\begin{align}
 X_0\cdot e-\min_{k\in K}k\cdot e
   &=\int_a^{x^*}(2v-v'^2)\dd x,                  \label{eq:left-ridge}\\
 \max_{k\in K}k\cdot e-X_0\cdot e
   &=\int_{x^*}^{b}(2v-v'^2)\dd x.               \label{eq:right-ridge}
\end{align}
The function $\sqrt v$ is nondecreasing on $[a,x^*]$ and nonincreasing on
$[x^*,b]$, and \eqref{eq:p-function} gives
\begin{equation}\label{eq:v-obstacle}
                         v'^2\leq2(M-v).
\end{equation}
Concavity of $\sqrt v$ and the endpoint continuity in
\eqref{eq:v-endpoints} make $\sqrt v$, and hence $v$, absolutely
continuous on each of these two flanks.
Apply Lemma~\ref{lem:ridge} to the left flank and to the reversed right
flank.  Equations \eqref{eq:left-ridge}--\eqref{eq:right-ridge} give
\[
 X_0\cdot e-\min_{k\in K}k\cdot e
     \geq\frac{2M}{3}(x^*-a),
 \qquad
 \max_{k\in K}k\cdot e-X_0\cdot e
     \geq\frac{2M}{3}(b-x^*).
\]
In support-function form, for every $e\in\Sph^1$,
\begin{equation}\label{eq:support-containment}
              h_{K-X_0}(e)\geq\frac{2M}{3}h_{\Omega-x_0}(e).
\end{equation}
Thus
\begin{equation}\label{eq:containment}
                   K-X_0\supset\frac{2M}{3}(\Omega-x_0).
\end{equation}
In particular, the right side contains the origin, so
\eqref{eq:containment} also proves $X_0\in K$.
\end{proof}

\begin{proof}[Proof of Theorem~\ref{thm:efficiency}]
We first treat a smooth strictly convex domain.  Multiply
$-\Delta u=1$ by $x\cdot\nabla u$ and integrate.  Since $u=0$ on the
boundary, the right-hand side is
$\int_\Omega x\cdot\nabla u=-2T$.  Integrating the left-hand side by
parts and using $\nabla u=-gn$ on $\partial\Omega$ gives, specifically in
dimension two,
\[
 \int_\Omega(-\Delta u)(x\cdot\nabla u)
 =-\frac12\int_{\partial\Omega}(x\cdot n)g^2\dd s.
\]
Consequently the planar Pohozaev identity is
\begin{equation}\label{eq:pohozaev}
             \int_{\partial\Omega}(x\cdot n)g^2\dd s=4T.
\end{equation}
By \eqref{eq:mixed-integral} and \eqref{eq:surface-measure},
\begin{equation}\label{eq:mixed-torsion}
 V(\Omega,K)=\frac12\int_{\partial\Omega}(x\cdot n)g^2\dd s=2T.
\end{equation}
Translation invariance, monotonicity, and \eqref{eq:containment} yield
\[
 2T=V(\Omega-x_0,K-X_0)
   \geq\frac{2M}{3}V(\Omega-x_0,\Omega-x_0)
   =\frac{2M}{3}A.
\]
This proves \eqref{eq:efficiency} for smooth strictly convex domains.

For a general bounded convex domain, place the origin in its interior and
choose smooth strictly convex bodies $\Omega_j$ with
\[
             \Omega\subset\Omega_j\subset(1+\varepsilon_j)\Omega,
             \qquad\varepsilon_j\downarrow0.
\]
Let $A_j$, $M_j$, and $T_j$ denote the corresponding area, maximum, and
torsional rigidity.  Domain monotonicity and the scaling laws give
\[
 A\leq A_j\leq(1+\varepsilon_j)^2A,
 \quad M\leq M_j\leq(1+\varepsilon_j)^2M,
 \quad T\leq T_j\leq(1+\varepsilon_j)^4T.
\]
Hence $A_j\to A$, $M_j\to M$, and $T_j\to T$.  Passing to the limit in
$T_j\geq A_jM_j/3$ proves \eqref{eq:efficiency} for every bounded convex
planar domain.  The collapsing triangles in Section~\ref{sec:sharpness}
show that $1/3$ is sharp.
\end{proof}

The containment in Proposition~\ref{prop:containment} has further
consequences.

\begin{corollary}\label{cor:airy-consequences}
In the smooth strictly convex setting, for every planar convex body $Q$,
\begin{equation}\label{eq:mixed-cor}
                    V(Q,K)\geq\frac{2M}{3}V(Q,\Omega).
\end{equation}
In particular,
\begin{equation}\label{eq:per-area-cor}
  \Per(K)\geq\frac{2M}{3}\Per(\Omega),
  \qquad |K|\geq\frac{4M^2}{9}|\Omega|.
\end{equation}
Moreover,
\begin{equation}\label{eq:K-area}
                   |K|=\int_\Omega(4u^2-|\nabla u|^4),
\end{equation}
and hence
\begin{equation}\label{eq:integral-cor}
                   \int_\Omega(4u^2-|\nabla u|^4)
                   \geq\frac{4M^2}{9}|\Omega|.
\end{equation}
Finally, there are a compact convex set $L$ and a point $Y_0$ such that
\begin{equation}\label{eq:complement-body}
 (K-X_0)+(L-Y_0)=2M(\Omega-x_0),
 \qquad L-Y_0\subset\frac{4M}{3}(\Omega-x_0).
\end{equation}
\end{corollary}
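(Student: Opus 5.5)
The first two claims are formal consequences of Proposition~\ref{prop:containment} and the properties of mixed area recorded in Section~\ref{sec:classical}. The area identity \eqref{eq:K-area} is a degree computation for the map $X$, and the complementary body $L$ comes from a Minkowski difference whose convexity rests on the $P$-function bound at the boundary.

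\emph{Mixed-area consequences.} For \eqref{eq:mixed-cor} I would use translation invariance, monotonicity and homogeneity of $V(Q,\cdot)$ together with \eqref{eq:containment}:
\[
V(Q,K)=V(Q,K-X_0)\geq V\bigl(Q,\tfrac{2M}{3}(\Omega-x_0)\bigr)=\tfrac{2M}{3}V(Q,\Omega).
\]
For the perimeter bound in \eqref{eq:per-area-cor}, take $Q$ to be the unit disk. Then \eqref{eq:mixed-integral} with $h_Q\equiv1$ gives $V(Q,C)=\Per(C)/2$. For the area bound, apply \eqref{eq:mixed-cor} twice:
\[
|K|=V(K,K)\geq\tfrac{2M}{3}V(K,\Omega),
\qquad
V(\Omega,K)\geq\tfrac{2M}{3}|\Omega|,
\]
where the second uses $Q=\Omega$. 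Symmetry of $V$ then gives $|K|\geq\tfrac{4M^2}{9}|\Omega|$.

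\emph{Area identity.} Since $X=-\nabla\phi$ is smooth on $\overline\Omega$ and $X|_{\partial\Omega}$ is the positively oriented boundary of $K$ (Lemma~\ref{lem:airy-body}), Green's formula gives
\[
|K|=\tfrac12\oint_{\partial\Omega}X\wedge dX=\int_\Omega\det DX=\int_\Omega\det D^2\phi .
\]
By \eqref{eq:airy}, $\det D^2\phi=\det\Sigma$, because conjugation by the rotation $J$ preserves determinants. Writing $p=|\nabla u|^2$, the matrix $\Sigma$ has eigenvalue $p-2u$ in the direction $J\nabla u$ and $-p-2u$ in the direction $\nabla u$. Hence $\det\Sigma=4u^2-|\nabla u|^4$, which is \eqref{eq:K-area}. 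Combining it with \eqref{eq:per-area-cor} gives \eqref{eq:integral-cor}.

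\emph{Complementary body.} Parametrize by the normal angle $\theta$ and set
\[
h:=2M\,h_{\Omega-x_0}-h_{K-X_0}.
\]
The main step is to show that $h$ is a support function, i.e.\ $h+h''\geq0$. Because $\Omega$ is smooth and strictly convex, $h_\Omega$ is $C^2$ and $h_\Omega+h_\Omega''=\rho_\Omega$, the radius of curvature at normal $\theta$. Lemma~\ref{lem:airy-body} gives $ds_K=g^2\,ds$ with the same normal, so $\rho_K=g^2\rho_\Omega$. Therefore
\[
h+h''=(2M-g^2)\rho_\Omega .
\]
Passing \eqref{eq:p-function} to the boundary, where $u=0$ and $|\nabla u|=g$, yields $g^2\leq2M$, so $h+h''\geq0$. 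Thus $h$ is the support function of a compact convex set, which I would call $L-Y_0$ (for instance $Y_0=0$). The Minkowski sum identity in \eqref{eq:complement-body} then holds because support functions add.

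For the containment in \eqref{eq:complement-body}, use \eqref{eq:support-containment} and the fact that $h_{\Omega-x_0}\geq0$ (since $x_0\in\Omega$):
\[
h\leq2Mh_{\Omega-x_0}-\tfrac{2M}{3}h_{\Omega-x_0}=\tfrac{4M}{3}h_{\Omega-x_0}.
\]
The only delicate points are justifying $g^2\leq2M$ up to the boundary, which holds by smoothness of $u$ on $\overline\Omega$, and the $C^2$ regularity of the support functions.
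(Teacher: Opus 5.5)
Your proposal is correct. The mixed-area and determinant parts match the paper up to cosmetics. The paper gets $|K|\geq\frac{4M^2}{9}|\Omega|$ in one line from area monotonicity applied to $K-X_0\supset\frac{2M}{3}(\Omega-x_0)$, where you pass through \eqref{eq:mixed-cor} twice, and it uses the matrix determinant lemma where you use eigenvectors.

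The real difference is in the construction of $L$. The paper notes that $(n_\Omega)_\#((2M-g^2)\,ds)$ is nonnegative and balanced and obtains $L$ from Minkowski's existence theorem. It then uses additivity of planar surface-area measures and Minkowski uniqueness to translate so that $K+L=2M\Omega$, which dictates $Y_0=2Mx_0-X_0$. You instead write down the candidate support function $h=2Mh_{\Omega-x_0}-h_{K-X_0}$ and verify $h+h''\geq0$. This is more explicit, needs neither existence nor uniqueness, and makes the sum identity automatic with $Y_0=0$.

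Its cost is regularity. The pointwise formula $h+h''=(2M-g^2)\rho_\Omega$ needs $C^2$ support functions, i.e.\ positive curvature. The paper's notion of smooth strict convexity (tangent angle nondecreasing and constant on no interval) allows the curvature of $\partial\Omega$ to vanish; there $\rho_\Omega=\infty$ and $h_\Omega\notin C^2$. The fix is to read $h+h''$ as a measure. Since $h_C+h_C''=S_C$ distributionally for every planar convex body, \eqref{eq:surface-measure} gives $h+h''=2MS_\Omega-S_K=(n_\Omega)_\#((2M-g^2)\,ds)\geq0$. A continuous $h$ with $h+h''\geq0$ in this sense is a support function. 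At that point your argument is the paper's, seen from the support-function side.

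One small remark: the positivity of $h_{\Omega-x_0}$ you invoke at the end is not needed, because the final subtraction is linear.
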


\begin{proof}
Formula \eqref{eq:mixed-cor} is mixed-area monotonicity applied to
\eqref{eq:containment}; perimeter monotonicity and area monotonicity give
\eqref{eq:per-area-cor}.  With $Y\wedge Z=Y_1Z_2-Y_2Z_1$, Green's formula
and \eqref{eq:X-boundary} give
\[
 |K|=\frac12\int_{\partial\Omega}X\wedge\dd X
     =\int_\Omega\det(DX)=\int_\Omega\det\Sigma.
\]
The matrix determinant lemma applied to \eqref{eq:stress} yields
$\det\Sigma=4u^2-|\nabla u|^4$, proving \eqref{eq:K-area} and
\eqref{eq:integral-cor}.

On the boundary, \eqref{eq:p-function} says $g^2\leq2M$.  The measure
$(n_\Omega)_\#((2M-g^2)\dd s)$ is nonnegative and balanced, hence it is
the surface-area measure of a compact convex set $L$.  In the plane,
surface-area measures add under Minkowski addition, so translations may
be chosen such that $K+L=2M\Omega$.  Set $Y_0=2Mx_0-X_0$ and subtract
\eqref{eq:support-containment} from the support-function identity for this
sum to obtain \eqref{eq:complement-body}.
\end{proof}

\begin{proposition}[Lower bound for the P\'olya functional]
\label{prop:lower-bound}
Every bounded convex planar domain satisfies
\[
                  \frac{\lambda _1T}{A}>\frac{\pi^2}{24}.
\]
\end{proposition}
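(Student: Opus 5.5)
The plan is to use the factorization already indicated in the introduction,
\[
 \frac{\lambda_1T}{A}=\bigl(\lambda_1(\Omega)M(\Omega)\bigr)\,\frac{T(\Omega)}{|\Omega|M(\Omega)},
\]
and to bound the two factors separately by results proved earlier. Both factors are scale invariant and strictly positive for any bounded convex domain, so the product of the two lower bounds transfers directly.

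For the first factor we invoke Proposition~\ref{prop:payne}. It gives the strict inequality $\lambda_1(\Omega)M(\Omega)>\pi^2/8$ for every bounded convex planar domain, with no smoothness assumption, since its proof is entirely interior or weak. For the second factor we invoke Theorem~\ref{thm:efficiency}, already extended to general bounded convex domains by outer approximation. It gives $T(\Omega)\geq\frac13|\Omega|M(\Omega)$, that is, $T/(AM)\geq 1/3$.

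Multiplying the two bounds gives
\[
 \frac{\lambda_1T}{A}=\lambda_1M\cdot\frac{T}{AM}>\frac{\pi^2}{8}\cdot\frac13=\frac{\pi^2}{24}.
\]
The only point requiring care is strictness. The efficiency bound is only non-strict after the approximation limit, but the Payne factor is strict, and $T/(AM)\geq 1/3>0$. So we have $\lambda_1M\cdot T/(AM)\geq \lambda_1M/3>\pi^2/24$, with the strict inequality coming from Payne. No further obstacle is expected: all of the analytic work lives in Proposition~\ref{prop:payne} and Theorem~\ref{thm:efficiency}, and this step is a direct combination of the two.
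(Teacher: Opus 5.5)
Your proposal is correct and matches the paper's proof. It uses the same factorization $\lambda_1T/A=(\lambda_1M)\cdot T/(AM)$, Payne's strict bound $\lambda_1M>\pi^2/8$ for the first factor, and the efficiency bound $T\geq AM/3$ from Theorem~\ref{thm:efficiency} for the second. You also correctly observe that strictness comes from Proposition~\ref{prop:payne}, since the efficiency bound is non-strict after the approximation limit.
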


\begin{proof}
Combine Theorem~\ref{thm:efficiency} with the strict Payne inequality
\eqref{eq:payne}.  This gives
\begin{equation}\label{eq:lower-main}
 \frac{\lambda _1T}{A}
   =(\lambda _1M)\frac{T}{AM}
   >\frac{\pi^2}{8}\cdot\frac13
   =\frac{\pi^2}{24}.
\end{equation}
\end{proof}

\section{The upper bound through the second torsion moment}\label{sec:upper}

Put $U=u/M$ and define the normalized level distribution
\begin{equation}\label{eq:level-defs}
 S(t)=\frac1A|\{U>t\}|,
 \qquad I(t)=\int_t^1S(s)\dd s,
 \qquad \Phi=I(0)=\frac{T}{AM}.
\end{equation}
By Lemma~\ref{lem:unique-max}, every $t\in(0,1)$ is a regular value.
Consequently the compact level hypersurfaces vary smoothly on compact
subintervals of $(0,1)$, and
\begin{equation}\label{eq:S-smooth}
                         S\in C^\infty_{\mathrm{loc}}(0,1).
\end{equation}

\subsection{The first factor}

\begin{lemma}[Second-moment ratio]\label{lem:first-factor}
For every bounded convex planar domain,
\begin{equation}\label{eq:first-factor-statement}
                       \frac{T^2}{A\int_\Omega u^2}<\frac56.
\end{equation}
\end{lemma}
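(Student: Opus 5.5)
We pass to the distribution function of $u$ and show that an explicit power of its primitive is convex. For $0\le t<M$ let $D_t=\{u>t\}$, $\mu(t)=|D_t|$, and $I(t)=\int_t^M\mu(s)\dd s$. This $I$ is the unnormalized version of \eqref{eq:level-defs}, which suffices for a scale-invariant ratio. By the layer-cake formula,
\[
 T=I(0),\qquad A=\mu(0)=-I'(0^+),\qquad \int_\Omega u^2=\int_0^M2t\mu(t)\dd t=2\int_0^MI(t)\dd t,
\]
the last identity by integration by parts. So \eqref{eq:first-factor-statement} is equivalent to
\[
 I(0)^2<\frac53\,(-I'(0))\int_0^MI .
\]

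\textbf{Step 1: a differential inequality for $I$.} Each $D_t$ is convex by \eqref{eq:power-concavity}. Its torsion function is $u-t$, so $T(D_t)=I(t)$. For $t\in(0,M)$, Lemma~\ref{lem:unique-max} makes $\partial D_t$ a smooth regular level curve, which gives two facts:
\[
 \int_{\partial D_t}|\nabla u|\dd s=\mu(t),\qquad -\mu'(t)=\int_{\partial D_t}\frac{\dd s}{|\nabla u|}.
\]
The first is the divergence theorem applied to $-\Delta u=1$; the second is the coarea formula. Cauchy--Schwarz then gives $\Per(D_t)^2\le\mu(t)(-\mu'(t))$. Inserting this into the strict torsion--perimeter inequality \eqref{eq:torsion-perimeter} for $D_t$ yields
\[
 \frac{\mu(t)^3}{3}<I(t)\Per(D_t)^2\le I(t)\mu(t)(-\mu'(t)).
\]
Since $I'=-\mu$ and $I''=-\mu'$, this reads $I\,I''>\tfrac13(I')^2$ on $(0,M)$. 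With $J=I^{2/3}$ we have $J''=\tfrac23 I^{-4/3}\bigl(I I''-\tfrac13 I'^2\bigr)$, so $J''>0$ on $(0,M)$. Moreover $J$ is continuous on $[0,M]$ with $J(M)=0$, hence $J$ is convex on $[0,M]$.

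\textbf{Step 2: the tangent-line estimate at $t=0$.} Put $J_0=J(0)$ and let $J'(0)=\tfrac23 I(0)^{-1/3}I'(0^+)<0$ be the right derivative. Note $I'(0^+)=-A$, because $\mu$ is right-continuous at $0$. Convexity gives $J(t)\ge(J_0-|J'(0)|t)_+$. Hence
\[
 \int_0^MI=\int_0^MJ^{3/2}\ge\int_0^{J_0/|J'(0)|}(J_0-|J'(0)|t)^{3/2}\dd t=\frac{2}{5}\frac{J_0^{5/2}}{|J'(0)|}.
\]
This step uses the inequality $J_0/|J'(0)|\le M$, which holds because $J(M)=0$ and $J$ lies above its tangent line. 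Since $-I'(0)=\tfrac32J_0^{1/2}|J'(0)|$, we obtain
\[
 2(-I'(0))\int_0^MI\ge\frac65J_0^3=\frac65T^2 ,
\]
which is the non-strict bound $\le 5/6$.

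\textbf{Step 3: strictness, and the expected obstacle.} Strictness comes from $J''>0$ on $(0,M)$, which is strict. Because of it, $J$ is strictly above its tangent line on $(0,M]$, so the integral inequality in Step~2 is strict.

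I expect the main obstacle to be the regularity bookkeeping at the endpoints $t=0$ and $t=M$. The required facts are:
\begin{itemize}
\item $I$ is $C^1$ on $[0,M)$ and $C^2$ on $(0,M)$, which follows from \eqref{eq:S-smooth} together with continuity of $\mu$, since level sets of $u$ have measure zero;
\item $J$ is continuous at $M$ with $J(M)=0$;
\item the right derivative of the convex function $J$ at $0$ equals the value obtained from $I'(0^+)=-A$.
\end{itemize}
For nonsmooth $\Omega$ the boundary-level formulas are never used at $t=0$; only interior levels $t>0$ enter Step~1. Convexity of $J$ on the closed interval then follows from $J''>0$ on the open interval together with continuity at the endpoints.
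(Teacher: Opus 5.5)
Your proposal is correct and follows the paper's proof essentially step for step: Cauchy--Schwarz with the flux and coarea identities on interior level sets, the strict torsion--perimeter inequality \eqref{eq:torsion-perimeter} applied to each $\{u>t\}$, strict convexity of $I^{2/3}$, the tangent-line bound at $t=0$, and the layer-cake identity $\int_\Omega u^2=2\int I$. The only difference is that you use the unnormalized distribution function, so your condition $J_0/|J'(0)|\le M$ is exactly the paper's $\Phi=T/(AM)\le 2/3$.
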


\begin{proof}
Let $\Omega_t=\{U>t\}$.  The function $u-Mt$ is the torsion function of
$\Omega_t$, so
\begin{equation}\label{eq:level-torsion}
        |\Omega_t|=AS(t),\qquad T(\Omega_t)=AM I(t).
\end{equation}
If $P_t=\Per(\Omega_t)$, flux and coarea give
\begin{equation}\label{eq:flux-coarea}
 \int_{\partial\Omega_t}|\nabla u|\dd s=AS(t),
 \qquad -AS'(t)=M\int_{\partial\Omega_t}|\nabla u|^{-1}\dd s.
\end{equation}
Set $\eta=M/A$ and $p_t=P_t/\sqrt A$.  Cauchy--Schwarz and
\eqref{eq:flux-coarea} imply
\begin{equation}\label{eq:per-upper}
                         p_t^2\leq\frac{S(t)(-S'(t))}{\eta}.
\end{equation}
Applying \eqref{eq:torsion-perimeter} to $\Omega_t$ and using
\eqref{eq:level-torsion},
\begin{equation}\label{eq:per-lower}
                         \frac{\eta I(t)p_t^2}{S(t)^3}>\frac13.
\end{equation}
Combining \eqref{eq:per-upper} and \eqref{eq:per-lower},
\begin{equation}\label{eq:I-convex-diff}
      I(t)(-S'(t))>\frac{S(t)^2}{3},
      \qquad\text{equivalently}\qquad
      I(t)I''(t)>\frac{I'(t)^2}{3}.
\end{equation}
Indeed,
\[
 (I^{2/3})''=\frac23 I^{-4/3}
 \left(I(t)I''(t)-\frac13I'(t)^2\right)>0.
\]
Thus $I^{2/3}$ is strictly convex on $(0,1)$ and extends convexly to
$[0,1]$.  Since $S(0+)=1$, $I(0)=\Phi$, $I'(0+)=-1$, and $I(1)=0$, its
tangent at the origin gives
\begin{equation}\label{eq:I-tangent}
      I(t)\geq\Phi\left(1-\frac{2t}{3\Phi}\right)_+^{3/2},
      \qquad \Phi\leq\frac23.
\end{equation}
The inequality is strict for $t>0$ before the positive part vanishes.
Integration yields
\begin{equation}\label{eq:I-integral}
                         \int_0^1I(t)\dd t>\frac{3\Phi^2}{5}.
\end{equation}
On the other hand, layer cake and Fubini give
\[
  \frac1{AM^2}\int_\Omega u^2
   =2\int_0^1tS(t)\dd t=2\int_0^1I(t)\dd t.
\]
Therefore
\begin{equation}\label{eq:first-factor}
                       \frac{T^2}{A\int_\Omega u^2}<\frac56.
\end{equation}
\end{proof}

\subsection{A weighted one-dimensional coordinate}

Since $S(t)>0$ for $0\leq t<1$, the function $I$ is strictly decreasing.
Introduce the decreasing coordinate and its inverse by
\begin{equation}\label{eq:y-coordinate}
       y(t)=\left(\frac{I(t)}{\Phi}\right)^{1/3},
       \qquad t=t(y),
\end{equation}
and, initially for $0<y<1$, define
\begin{equation}\label{eq:a-weight}
                         a(y)=\frac{S(t(y))}{3\Phi y}.
\end{equation}
We use its monotone one-sided representative at the endpoints; in
particular $a(1)=1/(3\Phi)$ and
$ya(y)=S(t(y))/(3\Phi)\to0$ as $y\downarrow0$.
Since $I=\Phi y^3$ and $I'=-S$,
\begin{equation}\label{eq:y-identities}
       \frac{dt}{dy}=-\frac{y}{a(y)},
       \qquad S(t(y))=3\Phi ya(y),
       \qquad \int_0^1\frac{y}{a(y)}\dd y=1.
\end{equation}
Moreover,
\[
        \frac{d}{dt}I(t)^{2/3}=-2\Phi^{2/3}a(y(t)).
\]
Convexity of $I^{2/3}$, together with the reversal of orientation between
$t$ and $y$, shows that $a$ is nondecreasing.  The Stieltjes-measure
change of variables is most precisely expressed as follows: for every
bounded Borel function $\psi$,
\begin{equation}\label{eq:stieltjes-push}
 \int_{[0,1]}\psi(t)(-\dd S(t))
 =3\Phi\int_{[0,1]}\psi(t(y))\dd(ya(y)).
\end{equation}
To see this directly, use $S(t(y))=3\Phi ya(y)$ and the fact that
$t:[0,1]\to[0,1]$ reverses orientation.  Thus the push-forward of the
measure $3\Phi\,\dd(ya)$ under $y\mapsto t(y)$ is precisely the measure
$-\dd S$, which is equivalent to \eqref{eq:stieltjes-push}.

For every Lipschitz function $H:[0,1]\to\R$ with $H(0)=0$, coarea and
flux applied to the torsion-coordinate trial function $H(U)$ give the
Rayleigh bound
\begin{equation}\label{eq:rayleigh-t}
       \lambda _1M\leq
       \frac{\int_0^1S(t)H'(t)^2\dd t}
            {\int_{[0,1]}H(t)^2(-\dd S(t))}.
\end{equation}
Writing $f(y)=H(t(y))$ and using
\eqref{eq:y-identities}--\eqref{eq:stieltjes-push}, we obtain
\begin{equation}\label{eq:rayleigh-y}
       \lambda _1M\leq
       \frac{\int_0^1a(y)^2f'(y)^2\dd y}
            {\int_{[0,1]}f(y)^2\dd(ya(y))}.
\end{equation}
If $m_k=A^{-1}\int_\Omega U^k$, then another use of
\eqref{eq:y-identities} gives $m_1=\Phi$ and
\[
 m_2=2\int_0^1tS(t)\dd t
    =6\Phi\int_0^1t(y)y^2\dd y
    =2\Phi\int_0^1\frac{y^4}{a(y)}\dd y.
\]
Consequently
\begin{equation}\label{eq:kappa}
       \frac{m_2}{m_1}=2\int_0^1\frac{y^4}{a(y)}\dd y=:2\kappa.
\end{equation}

\subsection{The sharp weighted estimate}

Set
\begin{equation}\label{eq:test-h}
                         h(y)=\sin\frac{\pi y}{2}.
\end{equation}
Define
\begin{equation}\label{eq:r-def}
 r(s)=2h(s)\int_0^s yh(y)\dd y-\frac{10}{\pi^2}s^4
\end{equation}
and its primitive
\begin{equation}\label{eq:R-def}
 R(x)=\int_0^xr(s)\dd s
 =\frac2{\pi^3}\left[\pi x(2+\cos(\pi x)-x^4)-3\sin(\pi x)\right].
\end{equation}

\begin{lemma}\label{lem:R-positive}
$R(x)\geq0$ for $0\leq x\leq1$, and $R(0)=R(1)=0$.
\end{lemma}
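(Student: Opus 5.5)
The plan is to prove positivity of $R$ by a sign analysis of its derivative $r=R'$, not of $R$ itself. The boundary values come straight from \eqref{eq:R-def}. At $x=0$ every term vanishes. At $x=1$ we have $\pi(2+\cos\pi-1)-3\sin\pi=0$. I will first confirm the closed form \eqref{eq:R-def} by differentiating it and comparing with \eqref{eq:r-def}. Since
\[
\int_0^s y\sin\tfrac{\pi y}{2}\dd y=\tfrac{4}{\pi^2}\bigl(\sin\theta-\theta\cos\theta\bigr),\qquad \theta=\tfrac{\pi s}{2},
\]
this amounts to the formula
\[
r(s)=\frac{8}{\pi^2}\sin\theta\,(\sin\theta-\theta\cos\theta)-\frac{10}{\pi^2}s^4 .
\]
The core claim is that $r$ has exactly one sign change on $(0,1)$, from positive to negative. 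Granting this, $R$ increases and then decreases on $[0,1]$. Together with $R(0)=R(1)=0$, this gives $R\geq0$.

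To locate the sign change, substitute $s=2\theta/\pi$ and divide by $\theta^4$. Then $r(s)>0$ if and only if
\[
\rho(\theta):=\frac{\sin\theta\,(\sin\theta-\theta\cos\theta)}{\theta^4}>\frac{20}{\pi^4}.
\]
The endpoint values are $\rho(0+)=1/3$ and $\rho(\pi/2)=16/\pi^4$. These satisfy $1/3>20/\pi^4>16/\pi^4$, since $20/\pi^4\approx0.205$ and $16/\pi^4\approx0.164$. Hence it suffices to show that $\rho$ is strictly decreasing on $(0,\pi/2]$; the threshold is then crossed exactly once.

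For the monotonicity I will use the double-angle variable $\varphi=2\theta\in(0,\pi]$. In it,
\[
\rho=\frac{4\bigl(2(1-\cos\varphi)-\varphi\sin\varphi\bigr)}{\varphi^4}.
\]
My first attempt is to write this as a cosine transform $\rho=\int_0^1 w(\tau)\cos(\tau\varphi)\dd\tau$ with a nonnegative polynomial weight. Integrating $\tau^j(1-\tau)^k$ against $e^{i\tau\varphi}$ by parts produces expressions of exactly this type, with denominator $\varphi^{k+2}$ when $w$ vanishes to order $k$ at $\tau=1$, so a weight such as $c\,\tau(1-\tau)^2$ is the natural candidate. With such a representation,
\[
\rho'(\varphi)=-\int_0^1\tau w(\tau)\sin(\tau\varphi)\dd\tau<0\quad\text{for }0<\varphi\leq\pi,
\]
because $\sin(\tau\varphi)>0$ there.

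If no clean weight appears, the fallback is to compute $\varphi^5\rho'/4$ directly:
\[
\frac{\varphi^5\rho'}{4}=-8+(8-\varphi^2)\cos\varphi+5\varphi\sin\varphi .
\]
I would then show this is negative on $(0,\pi]$ using its Taylor series. The series has leading term $-\varphi^6/360$ (note that the $\varphi^4$ terms cancel), and its tail can be bounded by an alternating-series argument.

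I expect this monotonicity step to be the main obstacle; everything else is routine. As a consistency check, the local picture agrees with the Taylor expansions. At $0$, $\pi^3R/2=(\pi^5/60-\pi)x^5+O(x^7)$, and $\pi^4>60$, so $R>0$ for small $x>0$. At $x=1$, $r(1)=-2/\pi^2<0$, so $R$ approaches $0$ from above.
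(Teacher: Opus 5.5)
Your proof is correct, but it is organized differently from the paper's.

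\textbf{How the paper argues.} The paper normalizes $R$ itself. With $z=\pi x$ and $q(z)=\bigl(z(2+\cos z)-3\sin z\bigr)/z^5$, one has $R=\tfrac{2z^5}{\pi^3}\bigl(q(z)-\pi^{-4}\bigr)$. So $R\ge0$ follows from $q$ being nonincreasing on $(0,\pi]$ together with $q(\pi)=\pi^{-4}$, and no sign-change count is needed. The condition $q'\le0$ reduces to $G(z)=z(8+7\cos z)-(15-z^2)\sin z\ge0$. This follows from $G'''=z(\sin z-z\cos z)\ge0$ and $G(0)=G'(0)=G''(0)=0$.

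\textbf{How your route relates.} You normalize $r=R'$ instead, then use unimodality and both zeros of $R$. The two arguments are closely linked:
\begin{itemize}
\item Your $\varphi$ is the paper's $z$.
\item Your $\rho$, written in $\varphi$, equals $20P'(\varphi)/(\varphi^5)'$, where $P$ is the numerator of $q$. By the monotone L'H\^opital rule, your monotonicity of $\rho$ implies the paper's monotonicity of $q$.
\item Your fallback quantity $-8+(8-\varphi^2)\cos\varphi+5\varphi\sin\varphi$ is exactly $-G'(\varphi)$.
\end{itemize}

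\textbf{What each buys.} You need one integration fewer ($G'>0$ instead of $G\ge0$), and you learn in addition that $R$ is unimodal. The price is the sign-change argument. Your check $\rho(0+)>20/\pi^4$ is actually superfluous, since $r<0$ throughout would give $R(1)<0$. For proving $G'>0$ itself, the paper's observation $G'''\ge0$ is the quickest route.

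\textbf{Two computational slips.} Neither affects the outcome.
\begin{itemize}
\item The candidate weight $c\,\tau(1-\tau)^2$ fails. Because $w'(0)\neq0$, it yields $\tfrac{c}{\varphi^4}\bigl(6(1-\cos\varphi)-2\varphi\sin\varphi-\varphi^2\bigr)$, which has a spurious $\varphi^2$ term. Matching the boundary terms forces $w'(0)=0$, $w(1)=w'(1)=0$, $w''(1)=4$ and $w'''\equiv8$. This gives $w(\tau)=\tfrac23(1-\tau)^2(1+2\tau)\ge0$, and indeed $\int_0^1 w(\tau)\cos(\tau\varphi)\dd\tau=4\bigl(2(1-\cos\varphi)-\varphi\sin\varphi\bigr)/\varphi^4$ with $\int_0^1 w=1/3$. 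So your first route does go through with this weight.
\item In the fallback, the leading Taylor term is $-\varphi^6/90$, not $-\varphi^6/360$. The full series is $\sum_{n\ge3}(-1)^n\,4(n-1)(n-2)\,\varphi^{2n}/(2n)!$. Its consecutive-term ratios are $\tfrac{n}{n-2}\cdot\tfrac{\varphi^2}{(2n+1)(2n+2)}\le 3\pi^2/56<1$ on $(0,\pi]$. So the alternating-series argument does give negativity there.
\end{itemize}
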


\begin{proof}
For $0<z\leq\pi$, put
\[
                 q(z)=\frac{z(2+\cos z)-3\sin z}{z^5}.
\]
Then
\[
 z^6q'(z)=(15-z^2)\sin z-z(8+7\cos z)=-G(z),
\]
where, explicitly,
\[
                 G(z)=z(8+7\cos z)-(15-z^2)\sin z.
\]
A direct differentiation gives
\[
 G'''(z)=z(\sin z-z\cos z)\geq0,
 \qquad G(0)=G'(0)=G''(0)=0.
\]
The inequality follows because
$\sin z-z\cos z=\int_0^zs\sin s\dd s\geq0$ on $[0,\pi]$.
Thus $G\geq0$, $q$ is nonincreasing, and
$q(z)\geq q(\pi)=\pi^{-4}$.  After setting $z=\pi x$, this is exactly
the nonnegativity of \eqref{eq:R-def}; the endpoint identities are direct.
\end{proof}

\begin{proposition}[Weighted one-dimensional factor]
\label{prop:second-factor}
For every bounded convex planar domain,
\begin{equation}\label{eq:second-factor-statement}
                       \frac{\lambda _1\int_\Omega u^2}{T}
                       \leq\frac{\pi^2}{10}.
\end{equation}
\end{proposition}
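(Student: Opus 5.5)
The plan is to reduce the statement to the weighted one-dimensional bound \eqref{eq:rayleigh-y} and to choose the trial function so that the numerator does not depend on the weight $a$. By the normalizations of the previous subsection,
\[
\frac{\int_\Omega u^2}{T}=M\frac{m_2}{m_1}=2M\kappa ,
\]
using \eqref{eq:kappa}. The claim \eqref{eq:second-factor-statement} is therefore equivalent to
\[
\lambda_1 M\leq \frac{\pi^2}{20\kappa}.
\]
The orientation matters here: $y=1$ corresponds to $t=0$. The constraint $H(0)=0$ thus becomes $f(1)=0$, while $y=0$ corresponds to the maximum point.

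\textbf{Choice of trial function.} I will use
\[
f(y)=\int_y^1\frac{h(s)}{a(s)}\dd s,\qquad\text{so that}\qquad f'=-\frac{h}{a}.
\]
With this choice the numerator of \eqref{eq:rayleigh-y} is
\[
\int_0^1 h^2\dd y=\tfrac12 ,
\]
which is independent of $a$.

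\textbf{Computing the denominator.} For the denominator $\int f^2\dd(ya)$ I integrate by parts in the Stieltjes sense. The boundary terms vanish, because $f(1)=0$ and $ya(y)\to0$ as $y\downarrow0$. This gives
\[
\int_{[0,1]}f^2\dd(ya)=2\int_0^1 y\,a\,f\,\frac{h}{a}\dd y=2\int_0^1 y\,h(y)f(y)\dd y .
\]
Inserting the definition of $f$ and applying Fubini turns this into
\[
\int_0^1\frac{1}{a(s)}\,2h(s)\int_0^s y\,h(y)\dd y\,\dd s .
\]
The required inequality then reads
\[
\int_0^1\frac{2h(s)\int_0^s yh\dd y}{a(s)}\dd s\;\geq\;\frac{10}{\pi^2}\int_0^1\frac{s^4}{a(s)}\dd s .
\]
By \eqref{eq:r-def} this is exactly $\int_0^1 r/a\,\dd s\geq0$.

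\textbf{Sign of $\int_0^1 r/a$.} Since $a$ is nondecreasing and positive, $1/a$ is nonincreasing. A second Stieltjes integration by parts gives
\[
\int_0^1\frac{r}{a}\dd s=\int_{[0,1]}\frac1a\dd R=\Bigl[\frac{R}{a}\Bigr]_0^1-\int_{[0,1]}R\dd\!\left(\frac1a\right).
\]
Lemma~\ref{lem:R-positive} gives $R\geq0$ and $R(0)=R(1)=0$, and $\dd(1/a)\leq0$. Hence the right-hand side is $\geq0$, which yields \eqref{eq:second-factor-statement}.

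\textbf{Main obstacle.} I expect the main work to be the justification of these manipulations near $y=0$, where $1/a$ may blow up. The key facts are:
\begin{itemize}
\item $h(s)/a(s)\leq \frac{\pi}{2}\, s/a(s)$, which is integrable by $\int_0^1 y/a\,\dd y=1$ from \eqref{eq:y-identities}. So $f$ is finite and absolutely continuous, and $H=f\circ y$ is admissible in \eqref{eq:rayleigh-t}.
\item $R(s)=O(s^3)$ near $0$. Together with the previous bound this gives $R/a\to0$, so the boundary term at $0$ vanishes.
\item Similarly, $f^2\,ya\to0$ as $y\downarrow0$, so the boundary term in the first integration by parts vanishes.
\end{itemize}
If $H$ fails to be Lipschitz at $t=1$, I will first truncate: replace $a$ by $\max\{a,\delta\}$ near $y=0$, or take the cutoff trial function $f_\delta$. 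I will then pass to the limit $\delta\to0$ using monotone convergence in both the numerator and the denominator.
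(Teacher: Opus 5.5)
Your proposal is correct and follows the paper's proof essentially step by step: the same trial function $f(y)=\int_y^1 h(s)/a(s)\,ds$, the same Stieltjes integration by parts and Fubini for the denominator, and the same sign argument from $R\geq0$, $R(0)=R(1)=0$ and the monotonicity of $1/a$. The truncation fallback is unnecessary, because $H'(t)=h(y)/y\leq\pi/2$ shows directly that $H$ is Lipschitz. Also, $R(s)=O(s^5)$, though your $O(s^3)$ suffices once you note that monotonicity gives $1/a(y)=o(y^{-2})$.
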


\begin{proof}
Let
\begin{equation}\label{eq:test-f}
                         f(y)=\int_y^1\frac{h(s)}{a(s)}\dd s.
\end{equation}
This gives an admissible trial in \eqref{eq:rayleigh-y}.  Indeed,
$h(y)\leq\pi y/2$ and \eqref{eq:y-identities} show that
\[
               f(0)\leq\frac\pi2\int_0^1\frac{y}{a(y)}\dd y
                      =\frac\pi2.
\]
In the original coordinate,
\[
                H'(t)=\frac{h(y)}{y}\leq\frac\pi2,
                \qquad H(0)=f(1)=0.
\]
Thus $H$ is Lipschitz and $H(U)\in H_0^1(\Omega)$.  The numerator is
\begin{equation}\label{eq:test-numerator}
                \int_0^1a^2f'^2\dd y=\int_0^1h^2\dd y=\frac12.
\end{equation}

For the denominator, note that $ya(y)=S(t(y))/(3\Phi)\to0$ as
$y\downarrow0$.  Lebesgue--Stieltjes integration by parts, followed by
Fubini, gives
\begin{align}
 D:=\int_{[0,1]}f^2\dd(ya)
  &=-2\int_0^1yaf f'\dd y                                      \notag\\
  &=2\int_0^1\frac{h(s)}{a(s)}
       \left(\int_0^s yh(y)\dd y\right)\dd s.                  \label{eq:D}
\end{align}
There is no boundary term: $f(1)=0$, while $f(0)<\infty$ and $ya(y)\to0$.
Since $a$ is nondecreasing, $w=1/a$ is nonincreasing.  Apply
Lebesgue--Stieltjes integration by parts on $[\varepsilon,1]$ and let
$\varepsilon\downarrow0$:
\begin{equation}\label{eq:r-weight}
                    \int_0^1\frac{r(s)}{a(s)}\dd s
                    =-\int_{[0,1]}R(s)\dd w(s)\geq0.
\end{equation}
The endpoint at $1$ vanishes because $R(1)=0$ and
$w(1)=3\Phi<\infty$.  At $0$, monotonicity and
\eqref{eq:y-identities} give
\[
       \frac{y^2w(y)}2\leq\int_0^ysw(s)\dd s\longrightarrow0,
\]
while $R(y)=O(y^5)$; hence $R(y)w(y)\to0$.
Equations \eqref{eq:D}, \eqref{eq:r-def}, and \eqref{eq:r-weight} imply
\begin{equation}\label{eq:D-lower}
                          D\geq\frac{10}{\pi^2}\kappa.
\end{equation}
Using \eqref{eq:test-numerator} and \eqref{eq:D-lower} in
\eqref{eq:rayleigh-y}, and then using \eqref{eq:kappa}, gives
\[
       2\kappa\,\lambda _1M\leq\frac{\kappa}{D}\leq\frac{\pi^2}{10},
\]
or equivalently
\begin{equation}\label{eq:second-factor}
                       \frac{\lambda _1\int_\Omega u^2}{T}
                       \leq\frac{\pi^2}{10}.
\end{equation}
\end{proof}

\begin{proof}[Proof of the inequalities in Theorem~\ref{thm:main}]
The exact factorization
\[
 \frac{\lambda _1T}{A}
 =\left(\frac{T^2}{A\int_\Omega u^2}\right)
  \left(\frac{\lambda _1\int_\Omega u^2}{T}\right)
\]
and \eqref{eq:first-factor}, \eqref{eq:second-factor} yield
\begin{equation}\label{eq:upper-main}
                         \frac{\lambda _1T}{A}<\frac56\cdot
                         \frac{\pi^2}{10}=\frac{\pi^2}{12}.
\end{equation}
Together with \eqref{eq:lower-main}, this proves the two strict
inequalities in Theorem~\ref{thm:main}.
\end{proof}

\section{Sharpness}\label{sec:sharpness}

For the upper constant, consider
\[
                   R_L=(-L/2,L/2)\times(0,1).
\]
Separation of variables gives
\begin{equation}\label{eq:rectangle-eigen}
                   \lambda _1(R_L)=\pi^2\left(1+\frac1{L^2}\right).
\end{equation}
The classical rectangular torsion series \cite[p.~108]{polyaszego1951} is
\begin{equation}\label{eq:rectangle-torsion}
 T(R_L)=\frac{L}{12}-\frac{16}{\pi^5}
   \sum_{k=0}^\infty
   \frac{\tanh((2k+1)\pi L/2)}{(2k+1)^5}.
\end{equation}
The series is uniformly bounded in $L$, so
\[
                 \frac{T(R_L)}{|R_L|}=\frac1{12}+O(L^{-1}).
\]
The same family also shows that the two intermediate constants in the
upper-bound proof are sharp.  Let $w(y)=y(1-y)/2$, the torsion function of
the infinite strip, and let $h_L=w-u_{R_L}$.  Then $h_L$ is harmonic,
vanishes on the horizontal sides, and equals $w$ on the vertical sides.
If
\[
 C=\max_{0\leq y\leq1}\frac{w(y)}{\sin(\pi y)},
\]
with the endpoint values interpreted by limits, the maximum principle
gives
\[
 0\leq h_L(x,y)\leq
 C\frac{\cosh(\pi x)}{\cosh(\pi L/2)}\sin(\pi y).
\]
The integral of the right-hand side over $R_L$ is bounded independently
of $L$.  Since $0\leq u_{R_L}\leq w\leq1/8$, it follows that
\[
 \left|\int_{R_L}u_{R_L}^2
       -L\int_0^1w(y)^2\dd y\right|=O(1).
\]
As $\int_0^1w=1/12$ and $\int_0^1w^2=1/120$, we obtain
\begin{equation}\label{eq:rectangle-factors}
 \frac{T(R_L)^2}{|R_L|\int_{R_L}u_{R_L}^2}\longrightarrow\frac56,
 \qquad
 \frac{\lambda _1(R_L)\int_{R_L}u_{R_L}^2}{T(R_L)}
 \longrightarrow\frac{\pi^2}{10}.
\end{equation}
Combining \eqref{eq:rectangle-eigen} with the torsion asymptotic above
yields
\begin{equation}\label{eq:upper-sharp}
                         F(R_L)\longrightarrow\frac{\pi^2}{12}.
\end{equation}
Comparison with the infinite-strip torsion function $y(1-y)/2$ gives
$M(R_L)\leq1/8$; monotone exhaustion of the strip and interior convergence
give $u_{R_L}(0,1/2)\to1/8$.  Hence $M(R_L)\to1/8$, and the same family
also shows the sharpness of Payne's constant.

For the lower constant and the efficiency constant, it is enough to use a
fixed collapsing triangular family.  Let
\[
 \Omega_\varepsilon=\{(x,y):0<x<1,\ 0<y<\varepsilon h(x)\},
 \qquad h(x)=1-2|x-\tfrac12|.
\]
Standard thin-domain asymptotics
\cite{borisov2010,borisov2013,vandenberg2020,banuelos2024} give the first
two limits below.  For the third, comparison with the strip of height
$\varepsilon$ gives $M(\Omega_\varepsilon)\leq\varepsilon^2/8$.  For each
fixed $\delta\in(0,1)$, the domain contains a rectangle of width $\delta$
and height $\varepsilon(1-\delta)$; the long-rectangle limit proved above
then gives the matching lower bound after letting first
$\varepsilon\downarrow0$ and then $\delta\downarrow0$.  Hence
\begin{equation}\label{eq:thin-asymptotics}
 \varepsilon^2\lambda _1(\Omega_\varepsilon)\longrightarrow\pi^2,
 \qquad
 \varepsilon^{-3}T(\Omega_\varepsilon)\longrightarrow
       \frac1{12}\int_0^1h^3,
 \qquad
 \varepsilon^{-2}M(\Omega_\varepsilon)\longrightarrow\frac18.
\end{equation}
Also $|\Omega_\varepsilon|=\varepsilon\int_0^1h$.  Since
\[
                   \int_0^1h=\frac12,
                   \qquad\int_0^1h^3=\frac14,
\]
equations \eqref{eq:thin-asymptotics} imply
\begin{equation}\label{eq:lower-eff-sharp}
       F(\Omega_\varepsilon)\longrightarrow\frac{\pi^2}{24},
       \qquad
       \frac{T(\Omega_\varepsilon)}
            {|\Omega_\varepsilon|M(\Omega_\varepsilon)}
       \longrightarrow\frac13.
\end{equation}
This proves sharpness in Theorems~\ref{thm:main} and
\ref{thm:efficiency}.

\end{document}